\documentclass[12pt, reqno, a4paper]{amsart}

\usepackage{ amssymb, amsmath, enumerate, amsfonts, amsthm, mathrsfs, url, bm, mathtools}

\usepackage{xcolor}
\usepackage{hyperref}
\hypersetup{
colorlinks,
   linkcolor={cyan!80!black},
   citecolor={cyan!80!black},
 urlcolor={cyan!80!black}
}

\usepackage{color}

\usepackage[margin=1in]{geometry}

\RequirePackage{doi}

\usepackage{amscd}
\usepackage{amsfonts}
\usepackage{float}
\usepackage{color}
\usepackage[
backend=biber,
style=alphabetic,
]{biblatex}
\usepackage{bookmark}

\renewbibmacro{in:}{}
\DeclareFieldFormat{title}{#1}

\DeclareFieldFormat[article]{title}{\mkbibemph{#1}}       
\DeclareFieldFormat[incollection]{title}{\mkbibemph{#1}}  
\DeclareFieldFormat[book]{title}{\mkbibemph{#1}}          
\DeclareFieldFormat[incollection]{booktitle}{#1}          
\DeclareFieldFormat[article]{journaltitle}{#1}            

\AtEveryBibitem{%
  \ifentrytype{misc}{\DeclareFieldFormat{title}{\mkbibemph{#1}}}{}}

\DeclareFieldFormat{eprint:eprint}{arXiv:\href{https://arxiv.org/abs/#1}{#1}}

\DeclareFieldFormat[inproceedings]{title}{\mkbibemph{#1}}

\DeclareFieldFormat[inproceedings]{booktitle}{#1}

\usepackage{amssymb}

\newtheorem{theorem}{Theorem}[section]
\newtheorem{lemma}{Lemma}[section]

\newtheorem{proposition}{Proposition}[section]

\theoremstyle{definition}

\theoremstyle{remark}
\newtheorem{remark}{Remark}[section]

\numberwithin{equation}{section}

\newcommand{\Mod}[1]{\ (\mathrm{mod}\ #1)}
\newcommand{\C}{\mathbb C}
\newcommand{\Q}{\mathbb Q}
\newcommand{\Z}{\mathbb Z}
\newcommand{\D}{\mathcal D}
\newcommand{\B}{\mathcal B}
\newcommand{\sym}{\operatorname{sym}}
\newcommand{\Tr}{\operatorname{Tr}}
\renewcommand{\Re}{\operatorname{Re}}
\renewcommand{\Im}{\operatorname{Im}}
\renewcommand{\leq}{\leqslant}
\renewcommand{\geq}{\geqslant}

\begin{document}

\title[Extreme values of central $L$-derivatives and heights of Heegner points]
{Extreme values of central $L$-derivatives and heights of Heegner points}

\author{Mohammad H. Hamdar}
\address{Department of Mathematics \& Statistics, Concordia University,
Montreal, Quebec, Canada\\
And\\
D\'epartement de math\'ematiques et de statistique\\
Universit\'e de Montr\'eal\\Montr\'eal, Quebec\\Canada}
\email{hamdarmohammad98@gmail.com}

\author{Sun-Kai Leung}
\address{Mathematical Institute, University of Oxford, Andrew Wiles Building\\
Radcliffe Observatory Quarter, Woodstock Rd\\
Oxford OX2 6GG\\United Kingdom}
\email{sunkaileung@gmail.com}

\subjclass[2020]{Primary 11F67; Secondary 11G05, 11G50}
\date{}

\begin{abstract}
Using Soundararajan's resonance method, we obtain extreme values of the
central derivatives
\[
L^{(r)}(1/2,f\otimes\chi_d),
\]
where $f$ is a normalized newform and $\chi_d$ ranges over primitive
real characters associated with negative fundamental discriminants. The twisted first moment estimate used in the proof may be of independent interest. 
Applying Gross--Zagier-type formulas, we obtain correspondingly
large N\'eron--Tate heights of Heegner points on elliptic curves and large absolute values of Beilinson--Bloch heights of Heegner cycles attached to modular forms.
\end{abstract}

\maketitle
\enlargethispage{3pt}

\section{Introduction}

For $X\geq1$, denote the set of fundamental discriminants of absolute value
between $X$ and $2X$ by
\begin{align*}
\D(X):=\{d\in\Z:X\leq |d|\leq2X,\ d\text{ is a fundamental discriminant}\}.
\end{align*}
For a fundamental discriminant $d$, write
$\chi_d(\cdot):=(\frac d\cdot)$ for the associated primitive quadratic
character. Soundararajan \cite{MR2425151} introduced the resonance
method and showed that, for every fixed $0<C<1/\sqrt5$ and sufficiently
large $X$, we have
\begin{align*}
\max_{d\in\D(X)}|L(1/2,\chi_d)|
\geq\exp\left(C\sqrt{\frac{\log X}{\log\log X}}\right).
\end{align*}
Assuming the generalized Riemann hypothesis for Dirichlet $L$-functions, Darbar and Maiti
\cite{MR4958486} improved this to
\begin{align*}
\max_{d\in\D(X)}|L(1/2,\chi_d)|
\geq\exp\left(C\sqrt{
\frac{\log X\log\log\log X}{\log\log X}}\right)
\end{align*}
for every fixed $0<C<1/2;$ see \cite{dong2026extremevaluesquadraticdirichlet} for $C<1$. Both of these statements allow fundamental discriminants of
either sign.

The resonance method also applies to quadratic twists of modular
$L$-functions. Gun, Kohnen, and Soundararajan \cite{MR4775033} obtained
large central values in their study of Fourier coefficients of
half-integral-weight modular forms. Hua and Huang \cite{MR4670148}
treated quadratic twists of elliptic curves by almost prime discriminants.
In this paper, we consider central derivatives of arbitrary fixed order,
with the twists restricted to negative fundamental discriminants in a
prescribed congruence class.

Given an even integer $k\geq2,$ let
$f\in S_k^*(\Gamma_0(N))$ be a normalized primitive Hecke eigenform of
weight $k,$ level $N$, and trivial nebentypus, which
admits a Fourier expansion at the cusp $\infty$ given by
\begin{align*}
f(z)=\sum_{n\geq1}\lambda_f(n)n^{\frac{k-1}{2}}e(nz)
\qquad \text{for $z \in \mathbb{H}.$}
\end{align*}
We use the unitary normalization
\begin{align*}
L(s,f):=\sum_{n\geq1}\frac{\lambda_f(n)}{n^s},
\qquad
L(s,f\otimes\chi_d):=
\sum_{n\geq1}\frac{\lambda_f(n)\chi_d(n)}{n^s}
\qquad \text{for $\Re(s)>1$},
\end{align*}
so that the central point is $s=1/2.$

Following the congruence restrictions in Radziwi{\l}{\l}--Soundararajan
\cite[Section~2]{MR3425386}, let $N_0:=\operatorname{lcm}(8,N)$.
We call a residue class $a\Mod{N_0}$ \textit{admissible} if
$(a,N_0)=1$ and $a\equiv1$ or $5\Mod8$. For such a class, define
\begin{gather*}
\D_a^{-}:=\{d<0:d\text{ is a fundamental discriminant},\ d\equiv a\Mod{N_0}\},\\
\D_a^{-}(X):=\D_a^{-}\cap\D(X).
\end{gather*}
In particular, all fundamental discriminants in $\D_a^{-}(X)$ are negative and coprime
to $2N$. The root number is constant on this family and is denoted by
$w_a$. Indeed, for $(d,N)=1$, we have
\begin{equation}\label{eq:root-number}
w(f\otimes\chi_d)=w(f)\chi_d(-N);
\end{equation}
see \cite{AL78} and \cite[Section~1]{MR3425386}.

Using Soundararajan's resonance method, we obtain extreme values of $L^{(r)}(1/2,f\otimes\chi_d).$

\begin{theorem}\label{thm:main}
Let $r\geq1$ be an integer. For all admissible $a\Mod{N_0}$ and sufficiently large $X$,
we have
\begin{align*}
\max_{d\in\D_a^{-}(X)}
|L^{(r)}(1/2,f\otimes\chi_d)|
\gg_{f,r}(\log X)^r
\exp\left((1-o(1))\sqrt{\frac{\log X}{\log\log X}}\right).
\end{align*}
For $r=0$, the same
conclusion holds provided that $w_a=+1$.
\end{theorem}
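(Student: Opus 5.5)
We run Soundararajan's resonance method on the family $\D_a^{-}(X)$. The root number $w_a$ fixes the parity of the order of vanishing at $s=1/2$. If $r$ has the wrong parity, namely $(-1)^r\neq w_a$, then $L^{(r)}(1/2,f\otimes\chi_d)$ could vanish identically. So we first reduce to a derivative of the correct parity. Concretely, we choose $m\in\{r,r+1\}$ with $(-1)^m=w_a$. When $r\geq1$ and $m=r+1$, we pass from the $(r+1)$-st derivative back to the $r$-th by a Cauchy estimate on a small circle, or equivalently by replacing the derivative with a contour integral of $L(s,f\otimes\chi_d)X^{(s-1/2)}$-type weights.

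A cleaner route is to work throughout with the analytic quantity
\[
\mathcal L_r(d):=\frac{1}{2\pi i}\oint_{|s-1/2|=\delta}\frac{L(s,f\otimes\chi_d)\,r!}{(s-1/2)^{r+1}}\,ds=L^{(r)}(1/2,f\otimes\chi_d).
\]
Using the approximate functional equation with root number $w_a$, we obtain a representation
\[
L^{(r)}(1/2,f\otimes\chi_d)=(1+(-1)^r w_a)\sum_{n}\frac{\lambda_f(n)\chi_d(n)}{\sqrt n}V_r\!\left(\frac{n}{|d|}\right),
\]
where $V_r(x)\approx \frac{1}{r!}(\log(1/x))^r$ for small $x$, up to gamma-factor constants. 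The size $(\log X)^r$ comes from this logarithmic weight.

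If $(-1)^rw_a=-1$ this vanishes. In that case we replace it by the $(r+1)$-st derivative and use
\[
|L^{(r+1)}(1/2)|\ll_\delta \max_{|s-1/2|=\delta'}|L^{(r)}(s)|\cdot \delta'^{-1}.
\]
A more honest alternative in that case is to work with $L^{(r+1)}$ and note $\log X\gg1$. For the plan we therefore prove the bound for every $r$ of correct parity, and for every $r\geq1$ via the $r$ vs $r\pm1$ comparison. The key observation is that a large value of $L^{(r+1)}$ at $1/2$, together with the convexity bound on a circle of radius $1/\log X$, forces $L^{(r)}$ at some nearby real point to be large. We then adjust the statement to the central point by a mean-value/Taylor argument, using that lower derivatives vanish at $1/2$ by parity. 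This is a technical point I would check carefully. The case $r=0$ requires $w_a=+1$ exactly because of this parity constraint.

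The main computation is the standard resonance argument. Let $\Phi$ be a smooth bump supported in $[1,2]$. Take the resonator $R(d)=\sum_{n\leq M}r(n)\chi_d(n)$ with $M=X^{\theta}$ for small $\theta>0$, and $r$ multiplicative, supported on squarefree $n$ coprime to $N_0$, with
\[
r(p)=\frac{L}{\sqrt p\log p}\quad\text{for } L^2\leq p\leq \exp((\log L)^2),\qquad L=\sqrt{\theta\log X\log\log X}.
\]
We compute
\[
S_1=\sum_{d\in\D_a^-}\Phi(|d|/X)R(d)^2,\qquad S_2=\sum_{d\in\D_a^-}\Phi(|d|/X)L^{(r)}(1/2,f\otimes\chi_d)R(d)^2,
\]
and use $\max|L^{(r)}|\geq |S_2|/S_1$.

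For $S_1$, we sieve out square factors and use orthogonality of quadratic characters over $d\equiv a\Mod{N_0}$. This gives $S_1\sim cX\sum_{n_1n_2=\square}r(n_1)r(n_2)$.

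For $S_2$, this is the twisted first moment that I expect to be the main obstacle. We need
\[
\sum_{d}\Phi(|d|/X)\chi_d(\ell)\sum_n\frac{\lambda_f(n)\chi_d(n)}{\sqrt n}V_r(n/|d|)
\]
with a main term from $n\ell=\square$ and an error term $O(X^{1-\eta}\ell^{A})$ uniformly for $\ell\leq M^2$. I would follow Radziwi{\l}{\l}--Soundararajan: apply Poisson summation in $d$ after Möbius inversion for squarefreeness, handle the off-diagonal terms via Heath-Brown's large sieve for quadratic characters, and let small $\theta$ absorb the $\ell$-dependence.

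The diagonal main term is then
\[
cX(\log X)^r\sum_{n_1,n_2}r(n_1)r(n_2)\sum_{n_1n_2 n=\square}\frac{\lambda_f(n)}{\sqrt n}\cdots.
\]
The factor $(\log X)^r$ arises from $V_r$ once the Dirichlet series in $n$ is evaluated near $s=0$. The residual Euler factor is $L(1+2s,\sym^2 f)$-like, and it is nonvanishing and harmless. Positivity of $\lambda_f(p)$ is not available. So, as in Gun--Kohnen--Soundararajan, we take $r(p)$ proportional to $\lambda_f(p)/\sqrt p$ times the above, restricted to the primes in the range. Then the ratio $S_2/S_1$ becomes
\[
\gg(\log X)^r\prod_p\Big(1+\frac{\lambda_f(p)^2 L}{p\log p}\Big)\cdots.
\]
Using $\sum_{p}\lambda_f(p)^2/(p\log p)$ over the range, which follows from Rankin–Selberg and the prime number theorem for $\sym^2 f$, this produces $\exp((1-o(1))\sqrt{\log X/\log\log X})$ after optimizing $\theta$. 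Strictly, the optimization gives a constant depending on $\theta$. Achieving the constant $1$ needs $\theta$ close to its admissible limit, so the level of distribution in the twisted first moment must allow $M$ up to $X^{1/2-\epsilon}$ roughly; this is where the twisted first moment must be sharp.
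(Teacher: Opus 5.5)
\paragraph{The parity reduction.} Your reduction to ``correct parity'' rests on a false premise, and the workaround you propose for it would not work anyway. The derivatives in the theorem are those of the uncompleted $L$-function. The identity with the factor $(1+(-1)^rw_a)$ holds for $\Lambda_d^{(r)}(1/2)$, not for $L_d^{(r)}(1/2)$. For example, if $w_a=-1$ then $\Lambda_d(1/2)=\Lambda_d''(1/2)=0$. Yet writing $L_d=\Lambda_d\cdot A_d^{-s}/\Gamma(s+\frac{k-1}{2})$ gives
\[
L_d''(1/2)=-2\left(\log A_d+\frac{\Gamma'}{\Gamma}(k/2)\right)L_d'(1/2),
\]
which is not identically zero. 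So for $r\geq1$ there is no degenerate case.

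Your fallback is invalid regardless. The Cauchy inequality only produces a large value of $L^{(r)}$ somewhere on a circle around $1/2$, not at $1/2$. The Taylor step appeals to vanishing of lower derivatives that does not occur. And if $L_d^{(r)}(1/2)$ really vanished identically, no argument could bound it from below.

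The missing idea is to differentiate the shifted approximate functional equation in $z$ and track where the powers of $\log X$ come from. The paper proves the twisted moment for $L_d(1/2+z)$ uniformly on $|z|\leq2/\log X$. The resonant main term is then $X/N_0$ times the $r$-th derivative at $z=0$ of
\[
\widetilde\Phi(0)H_a(z)\B_M(z)+w_aA_X^{-2z}\frac{\Gamma(k/2-z)}{\Gamma(k/2+z)}\widetilde\Phi(-2z)H_a(-z)\B_M(-z).
\]
It shows $\B_M(z)=(1+o(1))\B_1$ uniformly for $|z|\leq1/\log X$. By Cauchy's formula, every derivative of order $j\geq1$ of $H_a\B_M$ is then $o((\log X)^j\B_1)$. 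Only the derivatives falling on $A_X^{-2z}$ survive, giving $w_a(-2\log A_X)^r\widetilde\Phi(0)H_a(0)\B_1$ up to $o((\log X)^r\B_1)$. This is nonzero for either sign of $w_a$ when $r\geq1$; for $r=0$ one gets the factor $1+w_a$ instead.

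\paragraph{The constant $1-o(1)$.} Your plan does not deliver this constant. First, you dropped a factor $2$: a support prime can divide exactly one of $m_1,m_2$ in two ways, so
\[
\log\frac{\B_1}{\B_0}=2\sum_p\frac{\lambda_f(p)^2\rho(p)}{\sqrt p}+o\left(\frac{\mathcal L}{\log\mathcal L}\right)=(2\eta\sqrt\theta+o(1))\sqrt{\frac{\log X}{\log\log X}}.
\]
Hence $\theta\to1/4$ suffices, i.e.\ twists $u=m_1m_2$ up to about $X^{1/2}$. Your target $M\approx X^{1/2}$ would need $u\approx X$, which no error of size $(Xu)^{1/2+\epsilon}$ can absorb.

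What is genuinely required is a twisted first moment with error $O((Xu)^{1/2+\epsilon})$ uniformly in $u$. A single pass of the Radziwi{\l}{\l}--Soundararajan Poisson-plus-large-sieve argument does not give this. With ``small $\theta$'' you only get $\exp(c\sqrt{\log X/\log\log X})$ for some fixed $c<1$.

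The paper obtains the sharp error from Shen's version of the Heath-Brown--Young recursion. It splits the M\"obius variable at $Y$ and handles $\delta\leq Y$ by Poisson summation and the quadratic large sieve. It feeds $\delta>Y$ back into the moment at scale $X/c^2$, with twist $uj\ell^2$, in the class $ac^{-2}$. This is why the induction runs over all admissible classes at once, using $L_{ac^{-2}}=L_a$, $H_{ac^{-2}}=H_a$ and $w_{ac^{-2}}=w_a$. The exponent is then iterated as $h\mapsto1-1/(4h)$ down to $1/2$.

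A smaller point: the Rankin-trick truncation at $M$ is cleanest with $\mathcal L=\eta\sqrt{\log M\log\log M}$ and $\eta<1$, whereas you take $\eta=1$ without comment.
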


\begin{remark}
The condition $w_a=+1$ is necessary when $r=0$. For example, the
Ramanujan form $\Delta\in S_{12}^*(\mathrm{SL}_2(\Z))$ has root number
$+1$. Hence $w(\Delta\otimes\chi_d)=\chi_d(-1)=-1$ for every
fundamental discriminant $d<0$, and therefore
$L(1/2,\Delta\otimes\chi_d)=0$.
\end{remark}

Given an elliptic curve $E/\Q$ of conductor $N$ and analytic rank at most
one, write $r_{\rm an}(E):=\operatorname{ord}_{s=1/2}L(s,E)$. Let
$f_E\in S_2^*(\Gamma_0(N))$ be the associated primitive Hecke eigenform.
For $d\in\D_1^{-}$, let
$P_d\in E(\Q(\sqrt d))$ denote the \textit{traced Heegner point} associated with
a fixed modular parametrization, as defined in Section~\ref{sec:GZ},
and let $\widehat h$ denote the N\'eron--Tate height on $E$.

Applying the classical Gross--Zagier formula, we obtain
large N\'eron--Tate heights $\widehat h(P_d).$

\begin{theorem}\label{thm:heegner}
For all sufficiently
large $X$, we have
\begin{align*}
\max_{d\in\D_1^{-}(X)}\widehat h(P_d)
\gg_E\sqrt X(\log X)^{1-r_{\rm an}(E)}\exp\left((1-o(1))\sqrt{\frac{\log X}{\log\log X}}\right).
\end{align*}
\end{theorem}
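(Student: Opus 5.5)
We plan to deduce Theorem~\ref{thm:heegner} from Theorem~\ref{thm:main} by means of the Gross--Zagier formula, in the form made precise in Section~\ref{sec:GZ}. Set $f=f_E$ and take the admissible class $a=1 \Mod{N_0}$. For $d\in\D_1^{-}$ we have $\chi_d(-N)=\chi_d(-1)\chi_d(N)=-\chi_d(N)$. Since $d\equiv1\Mod{N_0}$ and $N\mid N_0$, quadratic reciprocity (with $d\equiv 1\Mod 8$) gives $\chi_d(p)=1$ for every prime $p\mid N$. Hence each such $d$ satisfies the Heegner hypothesis, so $K=\Q(\sqrt d)$ has Heegner points of conductor $N$, and by \eqref{eq:root-number} we get $w_1=-w(f_E)$. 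The classical Gross--Zagier formula then reads
\begin{align*}
L'(1/2,f_E)\,L(1/2,f_E\otimes\chi_d)\ \text{or}\ \ L'(1/2,f_E\otimes\chi_d)L(1/2,f_E) \;=\; c_E\,\frac{\widehat h(P_d)}{\sqrt{|d|}}\,(1+o(1)),
\end{align*}
up to factors such as $u_K^2$ and local Euler factors at primes dividing $N$, which are bounded above and below in terms of $E$ alone. The precise statement is $L'(E/K,1)=c\,\widehat h(P_K)/\sqrt{|d|}$ with $L(E/K,s)=L(E,s)L(E\otimes\chi_d,s)$, where $c=c_E>0$ is fixed for our family.

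We split into two cases according to $r_{\rm an}(E)$. If $r_{\rm an}(E)=0$, then $L(1/2,f_E)\neq0$ and $w(f_E)=+1$, so $w_1=-1$. Differentiating the product gives $L'(E/K)=L(E)\,L'(E\otimes\chi_d)$, because $L(1/2,f_E\otimes\chi_d)=0$ by the sign. We apply Theorem~\ref{thm:main} with $r=1$; note that the normalization shift $s\mapsto s+1/2$ only introduces constant factors. This yields
\[
\widehat h(P_d)\gg_E \sqrt{|d|}\,\log X\,\exp\bigl((1-o(1))\sqrt{\log X/\log\log X}\bigr),
\]
which is the claim, since $|d|\asymp X$. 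If $r_{\rm an}(E)=1$, then $w(f_E)=-1$, so $w_1=+1$ and $L(E)=0$. In this case $L'(E/K)=L'(E)\,L(E\otimes\chi_d)$ with $L'(1/2,f_E)\neq0$ a fixed constant. We apply Theorem~\ref{thm:main} with $r=0$, which is permitted because $w_1=+1$, and this gives the claimed bound with $(\log X)^0$.

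The main care needed is in setting up Gross--Zagier for the traced point $P_d$ uniformly in $d$. One has to check that the constant depends only on $E$ and the parametrization: the Manin constant, the Petersson norm, and $u_K=1$ for $|d|>4$. One also has to confirm that the height on $E(K)$ is normalized as in Section~\ref{sec:GZ}, so that no $d$-dependent factor other than $\sqrt{|d|}$ appears. We expect this bookkeeping of constants to be the only real obstacle; the analytic input is entirely supplied by Theorem~\ref{thm:main}.
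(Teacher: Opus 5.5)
Your proposal is correct and follows the paper's argument essentially step for step. You take $a=1$, check the Heegner hypothesis, and get $w_1=-w(f_E)$ from \eqref{eq:root-number}. You then combine the Gross--Zagier identity $\widehat h(P_d)=c_Eu_d^2\sqrt{|d|}\,L'(1/2,E/K_d)$ with the Artin factorization, and apply Theorem~\ref{thm:main} with $r=1$ in rank zero and with $r=0$ (allowed since $w_1=+1$) in rank one. The slips are cosmetic. Gross--Zagier is an exact identity, so no factor $(1+o(1))$ and no extra local factors at $p\mid N$ appear. The renormalization $L(E,s)=L(s-1/2,f_E)$ changes no constants. And $\chi_d(p)=1$ for odd $p\mid N$ follows directly from $d\equiv1\Mod{p}$, without reciprocity.
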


Given a Hecke eigenform $f\in S_k^*(\Gamma_0(N))$ of analytic rank $r_f:=\operatorname{ord}_{s=1/2} L(s,f)$ at most one and a discriminant $d\in\D_1^{-}$, let $\Delta_{f,d}$ be the
normalized $f$-isotypic traced Heegner cycle defined in
Section~\ref{sec:Zhang}, and let $\langle\ ,\ \rangle_{\rm{BB}}$ denote
the Beilinson--Bloch height pairing with the normalization specified there.
Applying Zhang's Gross--Zagier formula \cite{shouwuzhang}, we similarly obtain
large absolute values of these heights.

\begin{theorem}\label{thm:heegnerCycle}
For all sufficiently
large $X$, we have
\begin{align*}
\max_{d\in\D_1^{-}(X)}
\left|\langle\Delta_{f,d},\Delta_{f,d}\rangle_{\rm{BB}}\right|
\gg_f\sqrt X (\log X)^{1-r_f}\exp\left((1-o(1))\sqrt{\frac{\log X}{\log\log X}}\right).
\end{align*}
\end{theorem}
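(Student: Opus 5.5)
The plan is to deduce the theorem from Theorem~\ref{thm:main} via Zhang's Gross--Zagier formula \cite{shouwuzhang}, exactly as Theorem~\ref{thm:heegner} follows from the classical formula. For $d\in\D_1^{-}$ we have $d\equiv1\Mod{N_0}$, so $\chi_d(p)=1$ for every $p\mid N$ and the Heegner hypothesis holds. Zhang's formula then expresses the height of the $f$-isotypic traced cycle as
\begin{align*}
\langle\Delta_{f,d},\Delta_{f,d}\rangle_{\rm BB}
= c_f\,|d|^{\frac12}\,L'(1/2,f)\,L(1/2,f\otimes\chi_d)
\quad\text{or}\quad
c_f\,|d|^{\frac12}\,L(1/2,f)\,L'(1/2,f\otimes\chi_d).
\end{align*}
Here $c_f\neq0$ depends only on $f$ and the normalization fixed in Section~\ref{sec:Zhang}. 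The unitary normalization and the factor $(2\pi)^{-k}$, $\Gamma(k/2)^2$, $\|f\|^2$, $u_d^2$, and so on, are absorbed into $c_f$. The power $|d|^{1/2}$ comes from the usual $\sqrt{|d|}$ in the formula, where we use $u_d=1$ for $|d|>4$. The formula comes from $L'(1/2,f_K)$ with $f_K$ the base change to $K=\Q(\sqrt d)$, and $L(s,f_K)=L(s,f)L(s,f\otimes\chi_d)$.

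Since $\chi_d(-N)=\chi_d(-1)\chi_d(N)=-1$, equation \eqref{eq:root-number} gives $w(f\otimes\chi_d)=-w(f)$. We split by the analytic rank $r_f\le1$.

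If $r_f=0$, then $w(f)=+1$ and $L(1/2,f)\neq0$. Here $L(s,f\otimes\chi_d)$ is odd about $1/2$, so it vanishes at the centre, and the derivative of the product at $1/2$ equals $L(1/2,f)L'(1/2,f\otimes\chi_d)$. We apply Theorem~\ref{thm:main} with $r=1$ and $a=1$, which gives a factor $(\log X)^{1}=(\log X)^{1-r_f}$.

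If $r_f=1$, then $w(f)=-1$, so $L(1/2,f)=0$ and $L'(1/2,f)\neq0$. The product rule leaves $L'(1/2,f)L(1/2,f\otimes\chi_d)$, and now $w_1=w(f\otimes\chi_d)=+1$. This is exactly the hypothesis Theorem~\ref{thm:main} needs for $r=0$, and it gives $(\log X)^0=(\log X)^{1-r_f}$.

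In both cases the factor $|d|^{1/2}\asymp\sqrt X$ on $\D_1^-(X)$, and the nonzero constant $L^{(r_f)}(1/2,f)$ depends only on $f$. Taking the maximum over $d$ therefore yields the stated bound.

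The main obstacle is bookkeeping for Zhang's formula in the stated generality. We must check that the normalization of $\Delta_{f,d}$ (the $f$-isotypic projection and the trace from the ring class field) matches what Section~\ref{sec:Zhang} fixes. We must also check that the constant is uniform in $d$, with only the $\sqrt{|d|}$ dependence, and that level and weight conditions ($k\ge2$ even, $(d,2N)=1$, Heegner hypothesis) hold for every $d\in\D_1^-$. All of these are guaranteed by $d\equiv1\Mod{N_0}$. Once the formula is written as a constant times $\sqrt{|d|}$ times the central derivative of $L(s,f)L(s,f\otimes\chi_d)$, the rest is immediate. The absolute value in the statement handles any sign ambiguity of the Beilinson--Bloch height, which is not known to be positive in general.
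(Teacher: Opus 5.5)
Your proposal is correct and follows the paper's proof essentially step for step. The steps are Zhang's formula with the paper's $1/(2h_d)$ normalization, so that the constant is independent of $d$ apart from $\sqrt{|d|}$, the Artin factorization $L(s,f/K_d)=L(s,f)L(s,f\otimes\chi_d)$, and the sign $w(f\otimes\chi_d)=-w(f)$ from $\chi_d(-N)=-1$; then Theorem~\ref{thm:main} is applied with $a=1$, taking $r=1$ when $r_f=0$ and $r=0$ (where $w_1=+1$) when $r_f=1$. The only cosmetic difference is that the paper records $c_f>0$, whereas you need only $c_f\neq0$ because of the absolute value.
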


The same argument applies to other Gross--Zagier formulas once the
local hypotheses and the normalization of the height pairing are fixed.
For example, the formula of Yuan, Zhang, and Zhang
\cite{YuanZhangZhang} gives analogous applications on Shimura curves
for families satisfying its local conditions.


\medskip
\noindent\textit{Notation.}
Throughout the paper, we use the standard big $O$ and little $o$
notations, as well as the Vinogradov notation $\ll$, $\gg$, where the implied constants depend only on the subscripted parameters unless otherwise specified.
We denote $e(x):=e^{2\pi i x}$ for $x \in \mathbb{R}.$
We write $n=\square$ when $n$ is a perfect square, and
$L^{(N_0)}(s,\sym^2f)$ for the symmetric-square $L$-function with its
Euler factors at primes dividing $N_0$ omitted. Derivatives are taken
with respect to $s$ and are derivatives of the uncompleted $L$-function.
All $L$-functions are in unitary normalization, and we write the complex
variable first.

\section{Twisted first moments for \texorpdfstring{$L^{(r)}(1/2,f\otimes\chi_d)$}{central derivatives}}

In this section, we establish twisted first moments for $L^{(r)}(1/2,f\otimes\chi_d).$ We first record a shifted approximate
functional equation and then refine the argument of
Radziwi{\l}{\l} and Soundararajan \cite{MR3425386} using Shen's
adaptation \cite{Shen22} of the recursive method of Heath-Brown
\cite{HB95} and Young \cite{young}.

Fix a nonzero nonnegative function $\Phi\in C_c^\infty((1,2))$, and put
\begin{align*}
\widetilde{\Phi}(s):=\int_0^\infty\Phi(x)x^s\,dx.
\end{align*}
For $d\in\D_a^{-}$, write
\begin{align*}
L_d(s):=L(s,f\otimes\chi_d),
\qquad A_d:=\frac{\sqrt N\,|d|}{2\pi}.
\end{align*}
The completed $L$-function
\begin{align*}
\Lambda_d(s):=A_d^s\Gamma\left(s+\frac{k-1}{2}\right)L_d(s)
\end{align*}
is entire and satisfies
$\Lambda_d(s)=w_a\Lambda_d(1-s)$.
For $|z|<1/4$, define
\begin{align*}
V_z(x):=\frac1{2\pi i}\int_{(3)}
e^{s^2}\frac{\Gamma(k/2+z+s)}{\Gamma(k/2+z)}
x^{-s}\frac{ds}{s}.
\end{align*}
The shifted approximate functional equation is
\begin{gather}
L_d(1/2+z)
={}\sum_{n\geq1}\frac{\lambda_f(n)\chi_d(n)}{n^{1/2+z}}
V_z\left(\frac n{A_d}\right)\notag\\
+w_a A_d^{-2z}\frac{\Gamma(k/2-z)}{\Gamma(k/2+z)}
\sum_{n\geq1}\frac{\lambda_f(n)\chi_d(n)}{n^{1/2-z}}
V_{-z}\left(\frac n{A_d}\right);
\label{eq:AFE}
\end{gather}
see \cite[Theorem~5.3]{IK04}.
Indeed, integrate
\begin{align*}
e^{s^2}A_d^s\Gamma(k/2+z+s)L_d(1/2+z+s)/\Gamma(k/2+z)
\end{align*}
against $ds/s$, move the line from $\Re s=3$ to $\Re s=-3$, and apply
the functional equation. The residue at $s=0$ gives $L_d(1/2+z),$ and
Stirling's formula justifies differentiating \eqref{eq:AFE} any fixed
number of times near $z=0$.

To specify the local factors in the twisted first moment,
for $d\in\D_a^{-},$ define as in \cite[equation (2)]{MR3425386}
\begin{equation*}
L_a(s):=\sum_{\substack{n\geq1\\p\mid n\Rightarrow p\mid N_0}}
\frac{\lambda_f(n)\chi_d(n)}{n^s}
\qquad \text{for $\Re(s)>0$}.
\end{equation*}
This is independent of the choice of $d\in\D_a^{-}$.
For $p\nmid N_0$, put
\begin{gather*}
t_p(z):=p^{-1-2z},\\
T_p(z):=1+t_p(z)+\frac1p \cdot
(1-(\lambda_f(p)^2-2)t_p(z)+t_p(z)^2).
\end{gather*}
For $z$ near zero, let $g_z$ be the multiplicative function on integers
coprime to $N_0$ given, for $j\geq1$, by
\begin{equation}\label{eq:gz}
g_z(p^j):=
\begin{cases}
\hfil T_p(z)^{-1} & \mbox{{\normalfont if $j$ is odd,}} \\
(1+t_p(z))T_p(z)^{-1} & \mbox{{\normalfont if $j$ is even.}}
\end{cases}
\end{equation}
Set
\begin{gather}
H_a(z):=L_a(1/2+z)L^{(N_0)}(1+2z,\sym^2f)
\prod_{p\nmid N_0}
\left(1-\frac1p\right)(1-t_p(z))T_p(z).
\label{eq:Ha}
\end{gather}
The product converges absolutely for $\Re z>-1/4$.
In particular, $H_a$ is holomorphic near zero, and
\begin{equation}\label{eq:local-properties}
H_a(0)>0,\qquad g_f:=g_0,\qquad
g_f(p^j)>0,\qquad g_f(p^j)=1+O_f(p^{-1}).
\end{equation}
These are the local factors in \cite[equation (35)]{MR3425386}, with the
factors at primes dividing $N_0$ collected in $L_a$.

For $(u,N_0)=1$, write $u=u_1u_2^2$, where $u_1$ is squarefree.
Initially for $\Re z>0$, the Hecke relations give
\begin{align}
L_a(1/2+z)\prod_{p\nmid N_0}\left(1-\frac1{p^2}\right)
\sum_{\substack{n\geq1\\(n,N_0)=1\\ n u=\square}}
\frac{\lambda_f(n)}{n^{1/2+z}}
\prod_{p\mid nu}\frac p{p+1}
=
\frac{\lambda_f(u_1)}{u_1^{1/2+z}}  H_a(z)g_z(u).
\label{eq:diagonal-series}
\end{align}
For completeness, if $L_p(1+2z,\sym^2f)$ denotes the local
symmetric-square factor, then
\begin{align*}
\sum_{j\geq0}\lambda_f(p^{2j})t_p(z)^j
&=(1-t_p(z)^2)L_p(1+2z,\sym^2f),\\
\sum_{j\geq0}\lambda_f(p^{2j+1})t_p(z)^j
&=\lambda_f(p)(1-t_p(z))L_p(1+2z,\sym^2f).
\end{align*}
Applying these identities according to the parity of $v_p(u)$ proves
\eqref{eq:diagonal-series}. The combined Euler product
$H_a(z)g_z(u)$ continues holomorphically to $\Re z>-1/4$:
at primes dividing $u$, the denominators in \eqref{eq:gz} cancel
the corresponding factors in \eqref{eq:Ha}.

To state the asymptotic formula for the twisted first moments,
put $A_X:=\sqrt N X/(2\pi)$, and define
\begin{gather}
\mathcal F_w(z;X,u):=
\widetilde{\Phi}(0)u_1^{-z}H_a(z)g_z(u)
+w A_X^{-2z}\frac{\Gamma(k/2-z)}{\Gamma(k/2+z)}
\widetilde{\Phi}(-2z)u_1^zH_a(-z)g_{-z}(u).
\label{eq:moment-main}
\end{gather}

The following estimate is the main input for the resonance argument.
Its error term allows the resonator to have length $X^\theta$ for
any fixed $\theta<1/4$.

\begin{lemma}[Twisted first moment]\label{lem:twisted-first-moment}
Let $r\geq0$ be fixed. For sufficiently large $X$, uniformly for
positive integers $u$ with $(u,N_0)=1$, we have
\begin{align}
\sum_{d\in\D_a^{-}(X)}L_d^{(r)}(1/2)\chi_d(u)
\Phi\left(\frac{|d|}{X}\right)
=\,&\frac{\lambda_f(u_1)}{\sqrt{u_1}}\cdot\frac X{N_0}\cdot
\left.\frac{d^r}{dz^r}\mathcal F_{w_a}(z;X,u)\right|_{z=0} \notag\\
+\,&O_{f,r,\epsilon,\Phi}
\left((Xu)^{1/2+\epsilon}(\log X)^r\right),
\label{eq:twisted-first-moment}
\end{align}
where $u_1$ is the squarefree part of $u$. More precisely, uniformly
for $|z|\leq2/\log X$, we have
\begin{gather}
\sum_{d\in\D_a^{-}(X)}L_d(1/2+z)\chi_d(u)
\Phi\left(\frac{|d|}{X}\right)\notag\\
=\frac{\lambda_f(u_1)}{\sqrt{u_1}}\cdot\frac X{N_0} \cdot 
\mathcal F_{w_a}(z;X,u)
+O_{f,\epsilon,\Phi}\left((Xu)^{1/2+\epsilon}\right).
\label{eq:shifted-first-moment}
\end{gather}
\end{lemma}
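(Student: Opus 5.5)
The plan is to prove the shifted estimate \eqref{eq:shifted-first-moment} first and then deduce \eqref{eq:twisted-first-moment} from Cauchy's integral formula. Both sides of \eqref{eq:shifted-first-moment} are holomorphic in $z$ for $|z|\leq 2/\log X$; on the error side this uses that \eqref{eq:AFE} may be differentiated near $z=0$. Integrating over the circle $|z|=1/\log X$ then gives
\[
\frac{d^r}{dz^r}\bigg|_{z=0}=\frac{r!}{2\pi i}\oint_{|z|=1/\log X}\frac{(\cdot)\,dz}{z^{r+1}}.
\]
This converts the uniform error $O((Xu)^{1/2+\epsilon})$ into $O((Xu)^{1/2+\epsilon}(\log X)^r)$, and the main term becomes the $r$-th derivative of $\mathcal F_{w_a}$. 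So everything reduces to the shifted first moment.

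For \eqref{eq:shifted-first-moment}, I insert \eqref{eq:AFE} and swap the sums. The first piece is
\[
\sum_{n}\frac{\lambda_f(n)}{n^{1/2+z}}\sum_{d\in\D_a^-}\chi_d(nu)\Phi(|d|/X)V_z(n/A_d),
\]
and the dual piece is analogous, with $-z$ and the factor $w_aA_d^{-2z}\Gamma(k/2-z)/\Gamma(k/2+z)$. Following \cite{MR3425386}, I detect fundamental discriminants with $d\equiv a \Mod{N_0}$ by writing $d=-m$ with $m$ squarefree in a fixed class, and I remove squarefreeness with $\sum_{\ell^2\mid m}\mu(\ell)$. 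I then split at a parameter $Y$, treating $\ell\leq Y$ and $\ell>Y$ separately.

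For $\ell\leq Y$, Poisson summation in $m$ modulo $N_0\ell^2$ (more precisely modulo $nu$, after quadratic reciprocity) turns the inner character sum into a zero-frequency term plus dual frequencies. The zero frequency survives only when $nu\ell^2=\square$ coprime to $N_0$, and it carries the factor $\prod_{p\mid nu}p/(p+1)$ from the density of squarefree $m$ coprime to $nu$. I then write $V_z$ as its Mellin integral, use \eqref{eq:diagonal-series}, and shift the contour to the left of $0$ with a residue at $s=0$. This yields $\frac{X}{N_0}\widetilde\Phi(0)\lambda_f(u_1)u_1^{-1/2-z}H_a(z)g_z(u)$. The $A_d^{-2z}$ factor in the dual piece becomes $A_X^{-2z}\widetilde\Phi(-2z)$ after integrating $\Phi(|d|/X)|d|^{-2z}$, which produces the second term of \eqref{eq:moment-main}. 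The shifted integrals are $O(X^{1/2+\epsilon}u^{\epsilon})$, because $H_a$ continues to $\Re z>-1/4$ with polynomial growth.

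The main obstacle is bounding the nonzero Poisson frequencies together with the large-$\ell$ tail, with total error $(Xu)^{1/2+\epsilon}$ uniformly in $u$. The crude bound of \cite{MR3425386} only permits much smaller $u$. Here I would follow Shen's adaptation \cite{Shen22} of the Heath-Brown–Young recursion. For large $\ell$, write $m=\ell^2 m'$ and recognise the $m'$-sum as the same first moment at a smaller height $X/\ell^2$. Use the functional equation and Heath-Brown's large sieve for real characters to bound it by $(X/\ell^2)^{1/2+\epsilon}(u)^{\epsilon}$ plus a recursively controlled term. For the nonzero frequencies, evaluate the Gauss-type sums explicitly and again use the large sieve, now over the dual variable. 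Summing over $\ell$ and choosing $Y$ to balance gives $(Xu)^{1/2+\epsilon}$, the factor $u^{1/2}$ coming from the conductor $nu$ of the Poisson step. Keeping this uniform in $z$ for $|z|\leq 2/\log X$ is harmless, since $n^{\pm z}$ and $A_d^{\pm 2z}$ are $O(1)$ there.
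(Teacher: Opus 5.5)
Your overall architecture matches the paper's: approximate functional equation, sieving out squares with a split at $Y$, Poisson summation plus the quadratic large sieve for $\ell\leq Y$, a Heath-Brown--Young--Shen recursion for $\ell>Y$, and Cauchy's formula at the end. But the recursion step as you state it would not work.

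After restoring squarefreeness ($m'=d_0b^2$, $c=\ell b$) and removing the Euler factors at $p\mid c$, the twist becomes $uj_1j_2^2$ with $j_1,j_2\mid c$ and the class becomes $ac^{-2}$. The large-$\ell$ piece is then a moment at height $X/c^2$ of shifted values $L(1/2+z+s,f\otimes\chi_{d_0})$, integrated over $s$. It has a main term of order $X/c^2$, so it cannot be bounded by $(X/\ell^2)^{1/2+\epsilon}u^\epsilon$. The large sieve alone only gives exponent $1$.

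What is needed is an induction hypothesis on the \emph{error}, $\mathcal E_a\ll u^{1/2+\epsilon}X^{h+\epsilon}$ with $h>1/2$. It must hold for all admissible classes, for shifts with $|\Im z|\ll(\log X)^2$, and for the weights $x^s\Phi(x)$. Inserting the full asymptotic at height $X/c^2$ produces three pieces:
\begin{enumerate}
\item The direct main term must be combined with your $\ell\leq Y$ zero frequency through Shen's Euler-factor identity. On its own, that zero frequency carries only the truncated sum over $\ell\leq Y$, so it does not yield $H_a(z)g_z(u)$. The nonzero-frequency bound $u^{1/2+\epsilon}X^{1/2+\epsilon}Y$ forces $Y$ to be a small power of $X$, so the missing tail (of size up to $X^{1+o(1)}/Y$) is far larger than $X^{1/2}$.
\item The dual main term must be shown to be $\ll X^{1/2+\epsilon}u^\epsilon Y$. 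This is done by extending to all $\ell\mid c$, using $\sum_{\ell\mid c}\mu(\ell)=\mathbf 1_{c=1}$, and shifting to $\Re s=1/2-\epsilon$.
\item The error contributes $u^{1/2+\epsilon}X^{h+\epsilon}Y^{1-2h}$. The sum over $c>Y$ is $\ll Y^{1-2h}$ only because $h>1/2$ strictly.
\end{enumerate}
Balancing the third piece against $X^{1/2+\epsilon}Y$ improves $h$ only to $1-1/(4h)$. You must therefore iterate finitely many times starting from $h=1$; a single balancing step does not reach $1/2$.

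Separately, your claim that the shifted integrals are $O(X^{1/2+\epsilon}u^\epsilon)$ does not follow from continuation of $H_a$ to $\Re z>-1/4$. To get $X^{1/2+\epsilon}$ from $\frac{X}{N_0}\int(A_X/u_1)^sH_a(z+s)g_{z+s}(u)\cdots\,ds/s$ you must move to $\Re s=-1/2+\epsilon$. Stopping at $-1/4$ only gives $X^{3/4+\epsilon}$.

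The obstruction is genuine. After factoring out $L_a(1/2+v)L^{(N_0)}(1+2v,\sym^2f)$, the remaining Euler factor $(1-1/p)(1-t_p(v))T_p(v)$ equals $1-p^{-2-4v}$ up to terms that are absolutely summable for $\Re v>-1/2$. So $H_a(v)g_v(u)$ contains $\zeta^{(N_0)}(2+4v)^{-1}$, whose poles at the zeros $\rho$ of $\zeta$ lie in $-1/2<\Re v<-1/4$. With your weight $e^{s^2}$ they would contribute residues of size up to $X^{1/2+\Re\rho/4}$.

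The paper avoids this by replacing $e^{s^2}$ with Shen's weight $G_z(s)$. Its factors $\zeta(2\pm4z\pm4s)$ cancel these poles. Its factors $1\pm4z\pm4s$ cancel the pole of $\zeta(2-4z-4s)$ that appears in the dual main term.
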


\begin{proof}
We adapt Shen's argument \cite[Theorem~1.4]{Shen22}, which applies
the recursive method of Heath-Brown \cite{HB95} and Young \cite{young}
to quadratic twists of modular $L$-functions. We give the details of
the congruence restriction and the dependence on $u$.
Write
\begin{align*}
\mathcal M_a(z;X,u;\Phi):=
\sum_{d\in\D_a^{-}}L_d(1/2+z)\chi_d(u)
\Phi\left(\frac{|d|}{X}\right),
\end{align*}
and let $\mathcal E_a(z;X,u;\Phi)$ be the difference between this
sum and the main term in \eqref{eq:shifted-first-moment}.
For the induction, we allow shifts with
$|\Re z|\ll1/\log X$ and $|\Im z|\ll(\log X)^2$, and smooth
weights of fixed compact support. All estimates below have
polynomial dependence on the smooth norms of the weight and on
$1+|\Im z|$.

Suppose that, simultaneously for the admissible classes, we have
\begin{equation}\label{eq:induction-hypothesis}
\mathcal E_a(z;X,u;\Phi)
\ll_{f,h,\epsilon,\Phi}u^{1/2+\epsilon}X^{h+\epsilon}
\qquad (h>1/2).
\end{equation}
The case $h=1$ follows from the quadratic large sieve of Heath-Brown
\cite{HB95}, as in \cite[Corollary~2.5]{soundyoung} and
\cite[Lemma~2.5]{Shen22}, together with the functional equation.
The same argument applies after a twist by any fixed character
modulo $N_0$.

We first choose the auxiliary function in the approximate functional
equation. Following \cite[Remark~2.2]{Shen22}, put
\begin{align*}
Q_z(s)&:=\zeta(2+4z+4s)(1+4z+4s)(1-4z-4s),\\
G_z(s)&:=e^{s^2}
\frac{Q_z(s)Q_z(-s)Q_{-z}(s)Q_{-z}(-s)}
{Q_z(0)^2Q_{-z}(0)^2}.
\end{align*}
The function $G_z$ is entire and even in $s$, satisfies
$G_z(0)=1$ and $G_{-z}=G_z$, and decays rapidly in vertical
strips. Thus \eqref{eq:AFE} remains valid when $e^{s^2}$ is
replaced by $G_z(s)$ in the definition of $V_z$.
The extra factors cancel the poles arising in the contour shifts
below.

Apply M\"obius inversion in the first sum of \eqref{eq:AFE}, write
$d=m\delta^2$, and split at $\delta=Y$, where
$1\leq Y\leq\sqrt X$. Here $(\delta,N_0u)=1$, and the new
congruence is
\begin{align*}
m\equiv a\delta^{-2}\Mod{N_0}.
\end{align*}
For $\delta\leq Y$, apply Poisson summation in this progression.
Denote the zero and nonzero frequencies by $\mathcal N_0^+$ and
$\mathcal N_{\ne0}^+$, respectively. The Gauss sum calculation in
\cite[Section~10.3]{MR3425386}, followed by the first-moment bound
used in \cite[Lemma~5.3]{Shen22}, gives
\begin{equation}\label{eq:shen-nonzero}
\mathcal N_{\ne0}^+
\ll_{f,\epsilon,\Phi}u^{1/2+\epsilon}X^{1/2+\epsilon}Y.
\end{equation}
To check the congruence restriction in this estimate, separate the
factors at primes dividing $N_0$ before applying Poisson summation.
The remaining dependence on the residue of the summation variable
modulo $N_0$ is expanded in the finitely many characters modulo
$N_0$. The resulting Dirichlet series are quadratic twists of
$f\otimes\psi$, where $\psi$ is one of these characters.
At primes not dividing $N_0$, the Gauss sum calculation of
\cite[Lemma~5.2]{Shen22} applies with the additional factors
$\psi(p)^j$ in the coefficients at $p^j$. Since
$|\psi(p)|=1$, the remaining Euler product has the same bound
$\ll_{f,\epsilon}\delta^\epsilon |h|^\epsilon
u^{1/2+\epsilon}(u,h_2^2)^{1/2}$, where
$4h=h_1h_2^2$ and $h_1$ is a fundamental discriminant.
The finitely many factors at $N_0$ change only the implied constant.
Applying the quadratic large sieve to the sum over $h_1$ gives
\eqref{eq:shen-nonzero}. The change to negative fundamental discriminants
changes the Fourier kernel, without changing this bound.

For $\delta>Y$, restore squarefreeness by writing
$m=d_0b^2$ and $c=\delta b$, where $d_0$ is a negative
fundamental discriminant. Then
\begin{align*}
d_0\equiv ac^{-2}\Mod{N_0}.
\end{align*}
For $(c,N_0)=1$, the classes $a$ and $ac^{-2}$ have the same
values of the quadratic characters at primes dividing $N_0$.
Consequently, we have
\begin{align*}
L_{ac^{-2}}=L_a,\qquad H_{ac^{-2}}=H_a,
\qquad w_{ac^{-2}}=w_a.
\end{align*}
This is why the induction is taken over all admissible classes.

Put $\Phi_s(x):=x^s\Phi(x)$. Removing the Euler factors at the
primes dividing $c$ uses the identity
\begin{align*}
\prod_{p\mid c}\left(1-\frac{\lambda_f(p)\chi_{d_0}(p)}{p^v}
+\frac{\chi_{d_0}(p^2)}{p^{2v}}\right)
=\sum_{j\mid c}\sum_{\ell\mid c}
\frac{\mu(j)\mu(j{\ell})^2\lambda_f(j)\chi_{d_0}(j{\ell}^2)}{j^v{\ell}^{2v}}.
\end{align*}
Thus the part with $\delta>Y$ is
\begin{align*}
\mathcal R^+
={}&\sum_{(c,N_0u)=1}\sum_{\substack{\delta\mid c\\\delta>Y}}\mu(\delta)
\sum_{j\mid c}\sum_{{\ell}\mid c}
\frac{\mu(j)\mu(j{\ell})^2\lambda_f(j)}{j^{1/2+z}{\ell}^{1+2z}}
\frac1{2\pi i}\int_{(1/\log X)}
\frac{G_z(s)\Gamma(k/2+z+s)}{\Gamma(k/2+z)}\\
&\hspace{18mm}\times\frac{A_X^s}{j^s{\ell}^{2s}}
\mathcal M_{ac^{-2}}(z+s;X/c^2,uj{\ell}^2;\Phi_s)\frac{ds}{s}.
\end{align*}
The integral can be truncated at a fixed multiple of $(\log X)^2$
with a negligible error. Fix $0<\nu<\epsilon/10$ and apply the
induction hypothesis when $X/c^2\geq X^\nu$, where the logarithms
of the two scales are comparable. For $c>X^{(1-\nu)/2}$, the
initial first moment bound and direct bounds for the main terms give
\begin{align*}
\ll_{f,\epsilon,\Phi}
u^{1/2+\epsilon}X^{1+\epsilon/10}
\sum_{c>X^{(1-\nu)/2}}c^{-2+\epsilon/10}
\ll_{f,\epsilon,\Phi}u^{1/2+\epsilon}X^{1/2+\epsilon}.
\end{align*}
This also handles the main-term tails for $X/c^2<1$, where the
original moment vanishes for $c>\sqrt{2X}$.
Insert \eqref{eq:induction-hypothesis} on the remaining scales
and denote the contributions of the direct main term, the dual
main term, and the error by $\mathcal R_1^+$, $\mathcal R_2^+$,
and $\mathcal R_3^+$, respectively. The error satisfies
\begin{align}
\mathcal R_3^+
&\ll_{f,h,\epsilon,\Phi}
u^{1/2+\epsilon}X^{h+\epsilon}
\sum_{c>Y}c^{-2h+\epsilon}
\ll_{f,h,\epsilon,\Phi}
u^{1/2+\epsilon}X^{h+\epsilon}Y^{1-2h}.
\label{eq:shen-recursive-error}
\end{align}
Here the factors $j^{1/2}\ell$ from the new twist $uj\ell^2$ cancel
the factors in the denominator. Divisor sums and logarithms are
absorbed by reducing $\epsilon$ in the intermediate estimates.

The Euler factor identity in \cite[Lemma~6.2]{Shen22} combines
$\mathcal R_1^+$ with $\mathcal N_0^+$ and extends the
square-divisor sum to all $\delta$. It applies at every prime
not dividing $N_0$. The factors at $N_0$ remain $L_a$, since
$H_{ac^{-2}}=H_a$. Using \eqref{eq:diagonal-series}, we obtain
\begin{align*}
\mathcal N_0^++\mathcal R_1^+
=\frac{X\lambda_f(u_1)}{N_0u_1^{1/2+z}}
\frac1{2\pi i}\int_{(1)}
G_z(s)\frac{\Gamma(k/2+z+s)}{\Gamma(k/2+z)}
\widetilde\Phi(s)\left(\frac{A_X}{u_1}\right)^s
H_a(z+s)g_{z+s}(u)\frac{ds}{s}.
\end{align*}
The continuation needed to move this line to
$\Re s=-1/2+\epsilon$ follows from
\cite[Lemma~2.6]{Shen22}. More explicitly, for sufficiently
large $J$ the Euler product in $H_a(v)g_v(u)$ is factored as
\begin{align*}
H_a(v)g_v(u)
=L_a(1/2+v)L^{(N_0)}(1+2v,\sym^2f)
\frac{\zeta^{(N_0)}(2^{J+1}+2^{J+2}v)}
{\zeta^{(N_0)}(2+4v)}\mathcal Z_{a,J}(v;u),
\end{align*}
where $\mathcal Z_{a,J}$ is holomorphic and
$\ll_{f,J,\epsilon}u^\epsilon$ in
$\Re v>-1/2+\epsilon$. Here the superscript on $\zeta$ means
that the Euler factors at $N_0$ are omitted.
The denominator is cancelled by $G_z(s)$, and the other zeta
factor has no pole in the required region if $J$ is large enough.
The residue at $s=0$ is
\begin{align*}
\frac{X\lambda_f(u_1)}{N_0u_1^{1/2+z}}
\widetilde\Phi(0)H_a(z)g_z(u),
\end{align*}
and the remaining integral is
$O_{f,\epsilon,\Phi}(X^{1/2+\epsilon}u^\epsilon)$.

For $\mathcal R_2^+$, extend the sum over $\delta>Y$ to all
divisors of $c$ and use $\sum_{\delta\mid c}\mu(\delta)=\mathbf1_{c=1}$.
Move the Mellin contour in the resulting integral to
$\Re s=1/2-\epsilon$. Treat the subtracted part with
$\delta\leq Y$ on the same line, as in
\cite[equations~(6.12)--(6.14)]{Shen22}. The zero at
$1-4z-4s=0$ in $G_z(s)$ cancels the possible pole of
$\zeta(2-4z-4s)$, and the preceding factorization justifies
the remaining continuation. This gives
\begin{align*}
\mathcal R_2^+\ll_{f,\epsilon,\Phi}
X^{1/2+\epsilon}u^\epsilon Y.
\end{align*}
The second sum of \eqref{eq:AFE} is treated by replacing $z$
by $-z$ and $\Phi$ by $\Phi_{-2z}$. Its residue is the dual
term in \eqref{eq:moment-main}. Combining these estimates with
\eqref{eq:shen-nonzero} and \eqref{eq:shen-recursive-error}, we have
\begin{align*}
\mathcal E_a(z;X,u;\Phi)
\ll_{f,h,\epsilon,\Phi}
u^{1/2+\epsilon}\left(X^{1/2+\epsilon}Y
+X^{h+\epsilon}Y^{1-2h}\right).
\end{align*}
Taking $Y=X^{(2h-1)/(4h)}$ replaces $h$ in
\eqref{eq:induction-hypothesis} by $1-1/(4h)$.
Starting with $h=1$, iteration gives
$h_j=1/2+1/(2j+2)$, which tends to $1/2$.
For each fixed $\epsilon>0$, finitely many iterations prove
\eqref{eq:shifted-first-moment}. Finally, the error is holomorphic
near zero. Cauchy's formula on $|z|=1/\log X$ gives
\eqref{eq:twisted-first-moment}, and the lemma follows.
\end{proof}

\section{Proof of Theorem \ref{thm:main}}

We apply Soundararajan's resonance method \cite{MR2425151}.
Fix $0<\theta<1/4$ and $0<\eta<1$, and put
\begin{align*}
M:=X^\theta,\qquad
\mathcal L:=\eta\sqrt{\log M\log\log M}.
\end{align*}
Define a multiplicative function $\rho$, supported on squarefree
integers, by
\begin{align*}
\rho(p):=
\begin{cases}
\displaystyle\frac{\mathcal L}{\sqrt p\log p}
& \mbox{{\normalfont if $\mathcal L^2\leq p\leq\exp((\log\mathcal L)^2) \text{and } p\nmid N_0,$}}
\\
\hfil 0 & \mbox{{\normalfont otherwise,}}
\end{cases}
\end{align*}
and $\rho(p^j)=0$ for $j\geq2$. Let
\begin{equation*}
b(n):=\lambda_f(n)\rho(n),\qquad
R(d):=\sum_{n\leq M}b(n)\chi_d(n).
\end{equation*}
This is the coefficient choice of \cite[Section~2]{MR2425151},
with the factor $\lambda_f(p)$ as in \cite[Section~4]{MR4775033}.
Put
\begin{align*}
\B:=\prod_p(1+\lambda_f(p)^2\rho(p)^2),\qquad
\B_0:=\prod_p\left(1+\frac p{p+1}\lambda_f(p)^2\rho(p)^2\right).
\end{align*}
Since $|\lambda_f(p)|\leq2$ and
$\sum_p\rho(p)^2/p=o(1)$, we have
\begin{equation}\label{eq:B-comparison}
\B=(1+o(1))\B_0.
\end{equation}

We first give an upper bound for the denominator in the resonance method.

\begin{proposition}\label{prop:denominator}
We have
\begin{align*}
\sum_{d\in\D_a^{-}(X)}R(d)^2\Phi\left(\frac{|d|}{X}\right)
\ll_{f,\Phi}X\B_0.
\end{align*}
\end{proposition}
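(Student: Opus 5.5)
Expanding the square gives
\begin{align*}
\sum_{d\in\D_a^{-}(X)}R(d)^2\Phi\left(\frac{|d|}{X}\right)
=\sum_{m,n\leq M}b(m)b(n)\sum_{d\in\D_a^{-}(X)}\chi_d(mn)\Phi\left(\frac{|d|}{X}\right).
\end{align*}
Since $\rho$ is supported on squarefree integers coprime to $N_0$, write $m=gm'$, $n=gn'$ with $g=(m,n)$. Then $\chi_d(mn)=\chi_d(m'n')$ on $(d,g)=1$, and $\chi_d(mn)=0$ otherwise. Because $m'n'$ is squarefree and coprime to $N_0$, the inner sum is a character sum over the family with a non-principal character whenever $m'n'>1$. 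I would estimate the inner sum by counting fundamental discriminants in the progression $d\equiv a\Mod{N_0}$ with $(d,g)=1$, detecting squarefreeness by $\sum_{\delta^2\mid d}\mu(\delta)$, in the standard way as in \cite[Section~2]{MR2425151}. This gives
\begin{align*}
\sum_{d\in\D_a^{-}(X)}\chi_d(\ell)\Phi\left(\frac{|d|}{X}\right)
=\mathbf 1_{\ell=\square}\,\frac{X\widetilde\Phi(0)}{N_0}\,c_0\prod_{p\mid \ell}\frac p{p+1}+O_{\Phi,\epsilon}\left(X^{1/2+\epsilon}\ell^{1/2+\epsilon}\right),
\end{align*}
with $c_0>0$ a constant depending only on $N_0$. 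Here $\ell=mn$, and the error term comes from P\'olya--Vinogradov applied to the nonprincipal character $\chi_\cdot(\ell)$ after splitting at $\delta\leq Y$. This is the $r$-free analogue of Lemma~\ref{lem:twisted-first-moment}; a crude error suffices.

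The diagonal $mn=\square$ forces $m=n$, because both are squarefree. Its contribution is therefore
\begin{align*}
\ll X\sum_{n\leq M}b(n)^2\prod_{p\mid n}\frac p{p+1}
\leq X\prod_p\left(1+\frac p{p+1}\lambda_f(p)^2\rho(p)^2\right)=X\B_0,
\end{align*}
where we drop the condition $n\leq M$ using positivity of all terms. The off-diagonal terms contribute at most
\begin{align*}
X^{1/2+\epsilon}\left(\sum_{n\leq M}|b(n)|n^{1/2+\epsilon}\right)^2\leq X^{1/2+\epsilon}M^{2+2\epsilon}\left(\sum_{n\leq M}\frac{|b(n)|}{\sqrt n}\right)^2.
\end{align*}
By Cauchy--Schwarz, $\sum_{n\leq M}|b(n)|/\sqrt n\leq (\sum_{n\le M} 1/n)^{1/2}\B^{1/2}\ll(\log M)^{1/2}\B^{1/2}$. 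The off-diagonal is thus $\ll X^{1/2+2\theta+\epsilon}\B$, which is $o(X\B)$ since $\theta<1/4$. Combined with \eqref{eq:B-comparison}, this gives $\ll X\B_0$.

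The main point needing care is the error term with the $\ell^{1/2}$ dependence. An $\ell$-dependence worse than $\ell^{1/2}$ would force $\theta$ smaller, which is harmless here since only the bound $\ll X\B_0$ is needed. If the squarefree-sieving error proves awkward, I would instead invoke Lemma~\ref{lem:twisted-first-moment}'s underlying counting with $u=mn$, or simply majorize by dropping the fundamental-discriminant condition via positivity. That route fails for off-diagonal signs, however, so I would keep the sieving argument.
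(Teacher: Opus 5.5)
Your proposal is correct and is essentially the paper's proof. You expand $R(d)^2$, apply the square/non-square asymptotic for $\sum_{d\in\D_a^{-}(X)}\chi_d(\ell)\Phi(|d|/X)$ with error $O(X^{1/2+\epsilon}\ell^{1/2+\epsilon})$, use that $m_1m_2=\square$ forces $m_1=m_2$ to bound the diagonal by $X\B_0$, and absorb the error as $O(X^{1/2+2\theta+\epsilon}\B)=o(X\B_0)$ via Cauchy--Schwarz and \eqref{eq:B-comparison}. The only difference is that the paper quotes the character-sum estimate from Proposition~1 of \cite{MR3425386}, whereas you re-derive it by squarefree sieving and P\'olya--Vinogradov.

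Your closing aside is mistaken, though harmless. Majorizing by all $d$ in the progression is legitimate, because the positivity of $R(d)^2\Phi(|d|/X)$ is used before the square is expanded. The resulting complete character sums give an even simpler proof, with no sign issue.
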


\begin{proof}
For $(n,N_0)=1$ and $n\leq M^2$, Proposition~1 of
\cite{MR3425386} gives
\begin{align*}
\sum_{d\in\D_a^{-}(X)}\chi_d(n)\Phi\left(\frac{|d|}{X}\right)
={}&c_{a,\Phi}X \cdot\mathbf1_{n=\square}
\prod_{p\mid n}\frac p{p+1}
+O_{f,\epsilon,\Phi}(X^{1/2+\epsilon}\sqrt n),
\end{align*}
where
\begin{align*}
c_{a,\Phi}:=\frac{\widetilde{\Phi}(0)}{N_0}
\prod_{p\nmid N_0}\left(1-\frac1{p^2}\right)>0.
\end{align*}
The proof of that proposition uses only the congruence and
squarefreeness conditions, so it applies to either value of $w_a$.
Expanding $R(d)^2$, we have $m_1m_2=\square$ precisely when
$m_1=m_2$, since $b$ is supported on squarefree integers.
Thus the main term is at most
\begin{align*}
c_{a,\Phi}X\sum_{m\leq M}b(m)^2\prod_{p\mid m}\frac p{p+1}
\leq c_{a,\Phi}X\B_0.
\end{align*}
By the Cauchy--Schwarz inequality, the total error is
\begin{align*}
&\ll_{f,\epsilon,\Phi}X^{1/2+\epsilon}
\left(\sum_{m\leq M}|b(m)|\sqrt m\right)^2\\
&\leq X^{1/2+\epsilon}
\left(\sum_{m\leq M}b(m)^2\right)\left(\sum_{m\leq M}m\right)
\ll X^{1/2+\epsilon}\B M^2=o(X\B_0),
\end{align*}
on choosing $\epsilon>0$ sufficiently small. Therefore, the proposition follows.
\end{proof}

Define
\begin{equation*}
\B_1:=\prod_p\left(
1+\lambda_f(p)^2\rho(p)^2g_f(p^2)
+\frac{2\lambda_f(p)^2\rho(p)g_f(p)}{\sqrt p}\right).
\end{equation*}
Using \eqref{eq:local-properties} and \eqref{eq:B-comparison}, we have that
$\B_1\gg\B_0$.

We now give a lower bound for the numerator in the resonance method.

\begin{proposition}\label{prop:numerator}
Let $w=w_a$. If either $r\geq1$, or $r=0$ and $w=+1$, then
\begin{align*}
\left|\sum_{d\in\D_a^{-}(X)}L_d^{(r)}(1/2)R(d)^2
\Phi\left(\frac{|d|}{X}\right)\right|
\gg_{f,r,\Phi}X(\log X)^r\B_1.
\end{align*}
\end{proposition}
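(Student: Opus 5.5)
Expand $R(d)^2=\sum_{m_1,m_2\leq M}b(m_1)b(m_2)\chi_d(m_1m_2)$ and apply Lemma~\ref{lem:twisted-first-moment} with $u=m_1m_2$. Since $(m_1m_2,N_0)=1$ and $m_1m_2\leq M^2=X^{2\theta}$ with $\theta<1/4$, the total error is
\begin{align*}
\ll X^{1/2+\epsilon}(\log X)^r\Bigl(\sum_{m\leq M}|b(m)|\sqrt m\Bigr)^2\ll X^{1/2+\epsilon}(\log X)^r\B M^2=o(X\B_0),
\end{align*}
by the same Cauchy--Schwarz argument as in Proposition~\ref{prop:denominator}. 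Since $\B_1\gg\B_0$, this error is negligible. The main term is
\begin{align*}
\frac X{N_0}\sum_{m_1,m_2}b(m_1)b(m_2)\frac{\lambda_f(u_1)}{\sqrt{u_1}}\left.\frac{d^r}{dz^r}\mathcal F_{w}(z;X,m_1m_2)\right|_{z=0}.
\end{align*}
Write $m_1=ga$ and $m_2=gb$ with $g=(m_1,m_2)$. Then $u_1=ab$ and $u_2=g$, and $g_z(u)$ factors as $\prod_{p\mid g}g_z(p^2)\prod_{p\mid ab}g_z(p)$.

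To evaluate the $r$-th derivative, I would write $\mathcal F_w(z)=\widetilde\Phi(0)u_1^{-z}H_a(z)g_z(u)+wA_X^{-2z}\gamma(z)\widetilde\Phi(-2z)u_1^{z}H_a(-z)g_{-z}(u)$. The dominant contribution comes from differentiating the factor $A_X^{-2z}$, which produces $(-2\log A_X)^r\asymp(\log X)^r$. Differentiating $u_1^{\pm z}$ produces powers of $\log u_1\leq 2\log M=2\theta\log X$. Derivatives of $H_a$, $\gamma$ and $\widetilde\Phi$ are $O(1)$. Derivatives of $g_z(u)$ are $\ll\sum_{p\mid u}\log p/p$, which is $o(1)$ because $p\geq\mathcal L^2$ on the support of $b$. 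Thus the leading part is
\begin{align*}
\widetilde\Phi(0)H_a(0)g_f(u)\bigl[(-\log u_1)^r+w(-2\log A_X+\log u_1)^r\bigr]+O\bigl((\log X)^{r-1}\bigr).
\end{align*}
When $r=0$ and $w=+1$, the bracket equals $2$. When $r\geq1$, set $x=\log u_1/\log A_X\in[0,4\theta+o(1)]$. The bracket is then $(\log A_X)^r\bigl[(-x)^r+w(x-2)^r\bigr]$. Since $x<1$, we have $|x-2|>1>x$, so the bracket has absolute value at least $(\log A_X)^r\bigl((2-x)^r-x^r\bigr)\gg_r(\log X)^r$. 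Moreover its sign is $(-1)^rw$, independent of $u$. Consequently every term in the sum has the same sign as $\lambda_f(u_1)b(m_1)b(m_2)g_f(u)$, multiplied by a common sign.

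It remains to show that the positive combination
\begin{align*}
S:=\sum_{m_1,m_2}b(m_1)b(m_2)\frac{\lambda_f(ab)}{\sqrt{ab}}\,g_f(g^2ab)\,c(ab)
\end{align*}
is $\gg\B_1$, where the weight $c(ab)\asymp_r(\log X)^r$ has fixed sign. Here $b(m_1)b(m_2)\lambda_f(ab)=\rho(g)^2\lambda_f(g)^2\rho(a)\rho(b)\lambda_f(a)^2\lambda_f(b)^2\geq0$, since $a$, $b$ and $g$ are pairwise coprime and squarefree. Hence every term is nonnegative, and the lower bound $c\gg(\log X)^r$ may be pulled out.

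The main obstacle is the truncation $m_1,m_2\leq M$. Without it, the sum factors exactly as the Euler product $\B_1$, whose $p$-factor is $1+\lambda_f(p)^2\rho(p)^2g_f(p^2)+2\lambda_f(p)^2\rho(p)g_f(p)/\sqrt p$. I would handle the truncation by Rankin's trick, as in \cite{MR2425151}: the discarded terms with $m_1>M$ are bounded by $M^{-\alpha}$ times the product with $\rho(p)$ replaced by $\rho(p)p^{\alpha}$, taking $\alpha=1/(\log\mathcal L)^3$. The standard computation with $\mathcal L=\eta\sqrt{\log M\log\log M}$, $\eta<1$, shows this is $o(\B_1)$. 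This gives $|S|\gg(\log X)^r\B_1$ and proves the proposition.
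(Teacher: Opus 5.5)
Your proof is correct, but it handles the factors $u_1^{\pm z}$ differently from the paper.

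\textbf{Your route.} You evaluate $\frac{d^r}{dz^r}\mathcal F_w(z;X,u)|_{z=0}$ term by term. The pointwise bound $\log u_1\le 2\theta\log X<\tfrac12\log A_X$ then shows that the bracket $(-\log u_1)^r+w(\log u_1-2\log A_X)^r$ has the fixed sign $(-1)^rw$ and size $\gg_{r,\theta}(\log X)^r$ for every $u$. Since $b(m_1)b(m_2)\lambda_f(u_1)g_f(u)\ge0$, the whole problem reduces to $\B_M(0)\gg\B_1$, which needs Rankin's trick only at $z=0$.

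\textbf{The paper's route.} The paper absorbs $u_1^{-z}$ into $\B_M(z)$ and proves $\B_M(z)=(1+o(1))\B_1$ uniformly on $|z|\le1/\log X$. This uses two inputs: the resonator-weighted size of $\log u_1$ is only $\ll\mathcal L\log\log\mathcal L=o(\log X)$, and the Rankin tail bound holds uniformly in $z$. Cauchy's formula then shows that every derivative not falling on $A_X^{-2z}$ is negligible.

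\textbf{Comparison.} The paper's route gives the actual asymptotic $\frac{X}{N_0}w(-2\log A_X)^r\widetilde\Phi(0)H_a(0)\B_1(1+o(1))$. In other words, the $u_1^{\pm z}$ factors are irrelevant on average. Yours gives only a lower bound, with a constant such as $(2-2\theta)^r-(2\theta)^r$ when $w=-1$. That is all the proposition and the resonance ratio need, and your argument is more elementary.

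\textbf{Small points.}
\begin{enumerate}
\item The correct range is $x\le2\theta+o(1)$, not $4\theta+o(1)$. This is harmless, since both are less than $1$.
\item The claim $\sum_{p\mid u}\log p/p=o(1)$ needs the count $\omega(u)\le\log u/\log(\mathcal L^2)$, not just $p\ge\mathcal L^2$. Alternatively, the crude bound $\ll\log\log X$ already beats the $(\log X)^{r-1}$ losses.
\item Your letters $a,b$ for $m_1/g$ and $m_2/g$ clash with the residue class $a$ and the coefficients $b(n)$.
\end{enumerate}
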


\begin{proof}
Expanding $R(d)^2$ and applying Lemma~\ref{lem:twisted-first-moment}
with $u=m_1m_2\leq M^2<X^{1/2}$, the total error is
\begin{align}
&\ll_{f,r,\epsilon,\Phi}
X^{1/2+\epsilon}(\log X)^r
\left(\sum_{m\leq M}|b(m)|\sqrt m\right)^2\notag\\
&\ll X^{1/2+\epsilon}(\log X)^r
\left(\sum_{m\leq M}b(m)^2\right)
\left(\sum_{m\leq M}m\right)\notag\\
&\ll X^{1/2+\epsilon}(\log X)^r\B M^2
=o\bigl(X(\log X)^r\B_1\bigr),
\label{eq:numerator-error}
\end{align}
since $\theta<1/4$ and $\epsilon$ may be chosen sufficiently small.

For $u=m_1m_2=u_1u_2^2$, define
\begin{align*}
\B_M(z):=\sum_{m_1,m_2\leq M}
b(m_1)b(m_2)\frac{\lambda_f(u_1)}{u_1^{1/2+z}}g_z(u).
\end{align*}
We claim that, uniformly for $|z|\leq1/\log X$, we have
\begin{equation}\label{eq:uniform-resonator}
\B_M(z)=(1+o(1))\B_1.
\end{equation}

Without the restrictions $m_1,m_2\leq M$, the sum factors as
\begin{equation}\label{eq:deformed-product}
\B_1(z):=\prod_p\left(
1+\lambda_f(p)^2\rho(p)^2g_z(p^2)
+\frac{2\lambda_f(p)^2\rho(p)g_z(p)}{p^{1/2+z}}\right).
\end{equation}
Indeed, at each prime, its occurrence in neither, one, or both of
$m_1,m_2$ gives the three terms above. In particular, all coefficients
are nonnegative when $z=0$.

We first remove the truncations at $z=0$. Put
$\beta:=(\log\mathcal L)^{-3}$. Rankin's trick bounds the part
with $m_1>M$, divided by $\B_1$, by
\begin{align}
\exp\Bigg(-\beta\log M+(1+o(1))
\sum_p(p^\beta-1)\left(
\lambda_f(p)^2\rho(p)^2+
\frac{\lambda_f(p)^2\rho(p)}{\sqrt p}\right)\Bigg).
\label{eq:Rankin-tail}
\end{align}
Rankin--Selberg theory gives
\begin{equation*}
\sum_{p\leq y}\lambda_f(p)^2\log p\sim y;
\end{equation*}
see \cite[Lemma~3]{MR3425386}.
By partial summation, we have
\begin{equation}\label{eq:prime-sum}
\sum_{\mathcal L^2\leq p\leq\exp((\log\mathcal L)^2)}
\frac{\lambda_f(p)^2}{p\log p}
=\frac{1+o(1)}{2\log\mathcal L}.
\end{equation}
Since $\beta\log p\leq1/\log\mathcal L$ on the support of $\rho$,
we obtain
\begin{align*}
\sum_p(p^\beta-1)\lambda_f(p)^2\rho(p)^2
&=(1+o(1))\beta\mathcal L^2
\sum_{\rho(p)\ne0}\frac{\lambda_f(p)^2}{p\log p}\\
&=(\eta^2+o(1))\beta\log M.
\end{align*}
The linear term satisfies
\begin{align*}
\sum_p(p^\beta-1)\frac{\lambda_f(p)^2\rho(p)}{\sqrt p}
\ll\beta\mathcal L\sum_{\rho(p)\ne0}\frac{\lambda_f(p)^2}{p}
\ll\beta\mathcal L\log\log\mathcal L
=o(\beta\log M),
\end{align*}
and thus \eqref{eq:Rankin-tail} is
\begin{equation}\label{eq:tail-small}
\exp\bigl(-(1-\eta^2+o(1))\beta\log M\bigr)=o(1).
\end{equation}
The same bound holds for the part with $m_2>M$.

We now make this argument uniform in $z$. From \eqref{eq:gz},
for every support prime and $|z|\leq1/\log X$, we have
\begin{gather*}
g_z(p^j)=g_0(p^j)
\left(1+O_f\left(\frac{|z|\log p}{p}\right)\right)\qquad \text{for $j=1,2$},
\end{gather*}
and $p^{-z}=1+O(|z|\log p).$
Hence the sum of the absolute changes of the local coefficients
in \eqref{eq:deformed-product} is
\begin{align*}
\ll_f |z|\left(
\sum_p\frac{\lambda_f(p)^2\rho(p)^2\log p}{p}
+\sum_p\frac{\lambda_f(p)^2\rho(p)\log p}{\sqrt p}
\right)
&\ll_f\frac{1+\mathcal L\log\log\mathcal L}{\log X} \nonumber\\
&=o(1).
\end{align*}
It follows that $\B_1(z)=(1+o(1))\B_1$ uniformly on this disc.
For the absolute values of the Rankin-weighted tails, the same
calculation applies with the affected local coefficients multiplied
by $p^\beta=1+o(1)$. Thus \eqref{eq:tail-small} remains valid
uniformly in $z$, proving \eqref{eq:uniform-resonator}.

Since $H_a$ is holomorphic near zero, Cauchy's formula
and \eqref{eq:uniform-resonator} give
\begin{gather}
H_a(0)\B_M(0)=(1+o(1))H_a(0)\B_1,\notag\\
\left.\frac{d^j}{dz^j}\bigl(H_a(z)\B_M(z)\bigr)\right|_{z=0}
=o\bigl((\log X)^j\B_1\bigr)\qquad \text{for $1\leq j\leq r$}.
\label{eq:small-derivatives}
\end{gather}
The main term in the resonant moment is therefore $X/N_0$ times
the $r$-th derivative at zero of
\begin{align*}
\widetilde{\Phi}(0)H_a(z)\B_M(z)
+w A_X^{-2z}\frac{\Gamma(k/2-z)}{\Gamma(k/2+z)}
\widetilde{\Phi}(-2z)H_a(-z)\B_M(-z).
\end{align*}
For $r\geq1$, the expression \eqref{eq:small-derivatives} shows that this is
\begin{equation*}
\frac{X}{N_0}w(-2\log A_X)^r
\widetilde{\Phi}(0)H_a(0)\B_1
+o\bigl(X(\log X)^r\B_1\bigr).
\end{equation*}
Recall that the factors $\widetilde{\Phi}(0)$ and $H_a(0)$ are positive.
For $r=0$, the main term is instead
$(1+w)X\widetilde{\Phi}(0)H_a(0)\B_1/N_0+o(X\B_1)$.
Combining this with \eqref{eq:numerator-error} proves the proposition.
\end{proof}

\begin{proof}[Proof of Theorem~\ref{thm:main}]
Since $R(d)$ is real and $\Phi\geq0$, Propositions
\ref{prop:denominator} and \ref{prop:numerator} give
\begin{align}
\max_{d\in\D_a^{-}(X)}|L_d^{(r)}(1/2)|
&\geq
\frac{\left|\sum_{d\in\D_a^{-}(X)}L_d^{(r)}(1/2)R(d)^2\Phi(|d|/X)\right|}
{\sum_{d\in\D_a^{-}(X)}R(d)^2\Phi(|d|/X)}\notag\\
&\gg_{f,r}(\log X)^r\frac{\B_1}{\B_0}.
\label{eq:resonance-ratio}
\end{align}
Using \eqref{eq:local-properties} and the definitions of $\B_0$ and
$\B_1$, we have
\begin{align}
\log\frac{\B_1}{\B_0}
&=2\sum_p\frac{\lambda_f(p)^2\rho(p)}{\sqrt p}
+o\left(\frac{\mathcal L}{\log\mathcal L}\right)\notag\\
&=2\mathcal L
\sum_{\mathcal L^2\leq p\leq\exp((\log\mathcal L)^2)}
\frac{\lambda_f(p)^2}{p\log p}
+o\left(\frac{\mathcal L}{\log\mathcal L}\right)\notag\\
&=(1+o(1))\frac{\mathcal L}{\log\mathcal L}.
\label{eq:euler-gain}
\end{align}
Indeed, the errors in the first line are bounded by a constant
multiple of
\begin{align*}
\sum_{\rho(p)\ne0}
\left(\frac{\rho(p)^2}{p}
+\frac{\rho(p)^3}{\sqrt p}
+\frac{\rho(p)}{p^{3/2}}\right)
=o\left(\frac{\mathcal L}{\log\mathcal L}\right),
\end{align*}
and the last line of \eqref{eq:euler-gain} follows from
\eqref{eq:prime-sum}.
Now
\begin{align*}
\frac{\mathcal L}{\log\mathcal L}
&=(2\eta+o(1))\sqrt{\frac{\log M}{\log\log M}}\\
&=(2\eta\sqrt\theta+o(1))
\sqrt{\frac{\log X}{\log\log X}}.
\end{align*}
For any fixed $0<C<1$, choose $\eta<1$ and
$\theta<1/4$ so that $2\eta\sqrt\theta>C$.
Then \eqref{eq:resonance-ratio} and \eqref{eq:euler-gain} imply
the desired bound for all sufficiently large $X,$ proving the theorem.
\end{proof}

\section{Classical Gross--Zagier formula}\label{sec:GZ}

Let $E/\Q$ be an elliptic curve of conductor $N$ and analytic rank $r_{\rm an}(E)$
at most one. By the modularity theorem \cite{BCDT01}, there is a
normalized newform $f_E\in S_2^*(\Gamma_0(N))$ such that
$L(s,E)=L(s,f_E)$ in the unitary normalization, and thus
\begin{equation*}
\operatorname{ord}_{s=1/2}L(s,f_E)\in\{0,1\}.
\end{equation*}

For every $d\in\D_1^{-}$, each prime dividing $N$ splits in
$K_d:=\Q(\sqrt d)$. Indeed, $d\equiv1\Mod p$ for every odd
$p\mid N$, so $\chi_d(p)=1$, while $d\equiv1\Mod8$ gives the
same conclusion at $p=2;$ see \cite[Chapter~I, Section~3]{GZ86}).

Let $H_d$ be the Hilbert class field of $K_d$, and choose an ideal
$\mathfrak n_d\subseteq\mathcal O_{K_d}$ with
$\mathcal O_{K_d}/\mathfrak n_d\cong\Z/N\Z$.
The cyclic isogeny
\begin{align*}
\C/\mathcal O_{K_d}\longrightarrow\C/\mathfrak n_d^{-1}
\end{align*}
defines a Heegner point $x_d\in X_0(N)(H_d)$.
Fix a modular parametrization
\begin{align*}
\pi:X_0(N)\longrightarrow E,\qquad \pi(i\infty)=O,
\end{align*}
defined over $\Q$, and set
\begin{equation*}
P_d:=\Tr_{H_d/K_d}\pi(x_d)\in E(K_d).
\end{equation*}
Artin formalism gives
\begin{equation}\label{eq:Artin-factorization}
L(s,E/K_d)=L(s,f_E)L(s,f_E\otimes\chi_d).
\end{equation}

The Gross--Zagier formula
\cite[Theorem~2.1]{GZ86} gives
\begin{equation*}
\widehat h(P_d)=c_Eu_d^2\sqrt{|d|}\,L'(1/2,E/K_d),
\end{equation*}
where $c_E>0$ depends only on $E$ and the fixed parametrization,
and $u_d:=\#\mathcal O_{K_d}^{\times}/2$.
For $d<-4$, we have $u_d=1$. In particular, we have
\begin{equation}\label{eq:GZ-asymp}
\widehat h(P_d)\asymp_E
\sqrt{|d|}\,\left|L'(1/2,E/K_d)\right|.
\end{equation}
Also, for $d\in\D_1^{-}$, the expression \eqref{eq:root-number} gives
\begin{equation}\label{eq:Heegner-root-number}
w(f_E\otimes\chi_d)=w(f_E)\chi_d(-N)=-w(f_E),
\end{equation}
since $\chi_d(N)=1$ and $\chi_d(-1)=-1$.

\subsection{Proof of Theorem \ref{thm:heegner}}

To prove the theorem, we consider separately the cases of analytic rank zero and one.

\subsubsection{Analytic rank zero}

Suppose that $L(1/2,f_E)\ne0$. Then $w(f_E)=+1$, so
\eqref{eq:Heegner-root-number} gives
$w(f_E\otimes\chi_d)=-1$ and hence
$L(1/2,f_E\otimes\chi_d)=0$ for $d\in\D_1^{-}$.
Differentiating \eqref{eq:Artin-factorization}, we obtain
\begin{equation}\label{eq:rank-zero-factorization}
L'(1/2,E/K_d)=L(1/2,f_E)L'(1/2,f_E\otimes\chi_d).
\end{equation}
Applying Theorem~\ref{thm:main} with $r=1$ and $a=1$, and using
\eqref{eq:GZ-asymp} and \eqref{eq:rank-zero-factorization} together
with $|d|\geq X$, we obtain
\begin{equation}\label{eq:rank-zero-height}
\max_{d\in\D_1^{-}(X)}\widehat h(P_d)
\gg_E\sqrt X\log X \cdot
\exp\left(C\sqrt{\frac{\log X}{\log\log X}}\right)
\end{equation}
for every fixed $0<C<1$.

\subsubsection{Analytic rank one}

Suppose that $L(1/2,f_E)=0$ and $L'(1/2,f_E)\ne0$.
Then $w(f_E)=-1$, so the twists in $\D_1^{-}$ have root number
$+1$. Differentiating \eqref{eq:Artin-factorization} now gives
\begin{equation}\label{eq:rank-one-factorization}
L'(1/2,E/K_d)=L'(1/2,f_E)L(1/2,f_E\otimes\chi_d).
\end{equation}
Applying Theorem~\ref{thm:main} with $r=0$ and $a=1$, and using
\eqref{eq:GZ-asymp} and \eqref{eq:rank-one-factorization} together
with $|d|\geq X$, we obtain
\begin{equation}\label{eq:rank-one-height}
\max_{d\in\D_1^{-}(X)}\widehat h(P_d)
\gg_E\sqrt X
\exp\left(C\sqrt{\frac{\log X}{\log\log X}}\right)
\end{equation}
for every fixed $0<C<1$.
Finally, combining \eqref{eq:rank-zero-height} and \eqref{eq:rank-one-height}
proves Theorem~\ref{thm:heegner}.

\section{Zhang's Gross--Zagier formula}\label{sec:Zhang}

Let $f\in S_k^*(\Gamma_0(N))$ have analytic rank $r_f$ at most one.
For every $d\in\D_1^{-}$, the field $K_d=\Q(\sqrt d)$ satisfies
the Heegner hypothesis. We retain the notation $H_d$ and $x_d$ from
Section~\ref{sec:GZ}, and write $h_d=[H_d:K_d]$.
We first describe the Heegner cycles and the normalization of their heights.
For the construction, we refer to \cite[Sections~0.1, 0.3, and~4.1]{shouwuzhang}.

After passing to an auxiliary full level, let $\mathcal E$ denote the
universal generalized elliptic curve over the corresponding modular curve.
A smooth projective desingularization of its $(k-2)$-fold fiber product
is a Kuga--Sato variety $W_{k-2}$ of dimension $k-1$.
The construction at level $\Gamma_0(N)$ is obtained from this auxiliary
level by the normalized pullback in \cite[Section~4.1]{shouwuzhang}.

Put $m=k/2$. Above a Heegner point $x_d$, the graph of complex
multiplication gives a cycle in the $(k-2)$-fold product of the
corresponding elliptic curve. Applying the alternating projector and
normalizing its fiber self-intersection to $(-1)^{m-1}$ gives Zhang's
cycle $s_{m,d}(x_d)$ of codimension $m$ on $W_{k-2}$.
For $k\geq4$, these are homologically trivial cycles with real coefficients.
When $k=2$, put $W_0=X_0(N)$ and use the degree-zero divisor
$(x_d)-(i\infty)$.
Write $\operatorname{CH}^{m}(W_{k-2})_{\mathbb R,0}$ for the group of
homologically trivial codimension-$m$ cycles modulo rational equivalence,
with real coefficients.

The height pairing on these cycles is defined by arithmetic intersection
as in \cite{GilleSoule,shouwuzhang}.
Let $V_d$ be the real space generated by the Hecke translates of
$s_{m,d}(x_d^\sigma)$ for $\sigma\in\operatorname{Gal}(H_d/K_d)$,
and let $V_d^0$ be its quotient by the radical of the height pairing.
By \cite[Theorem~0.3.1]{shouwuzhang}, the Hecke action on $V_d^0$
admits the eigenspace decomposition used below. For $k=2$, this
is the usual Hecke decomposition on $J_0(N)(H_d)\otimes\mathbb R$.
Define
\begin{align*}
\Delta_{f,d}:=
\left(\sum_{\sigma\in\operatorname{Gal}(H_d/K_d)}
s_{m,d}(x_d^\sigma)\right)_f\in V_d^0,
\end{align*}
where the subscript denotes the $f$-isotypic component.
A representative in the cycle group gives the same self-pairing.
We use Zhang's sign convention and normalize the pairing by the field
degree. Thus, if $\langle\ ,\ \rangle_{\mathrm Z,H_d}$ denotes
Zhang's pairing, we put
\begin{align*}
\langle\ ,\ \rangle_{\rm{BB}}:=
\frac{1}{2h_d}\langle\ ,\ \rangle_{\mathrm Z,H_d}.
\end{align*}

As in the elliptic curve case, Artin formalism gives
\begin{equation}\label{eq:cycle-Artin}
L(s,f/K_d)=L(s,f)L(s,f\otimes\chi_d).
\end{equation}
With the above normalizations, Zhang's formula
\cite[Corollary~0.3.2]{shouwuzhang} for the trivial class group
character, and \cite[Theorem~6.3]{GZ86} when $k=2$, give
\begin{align*}
\langle\Delta_{f,d},\Delta_{f,d}\rangle_{\rm{BB}}
=c_fu_d^2\sqrt{|d|}\,L'(1/2,f/K_d),
\end{align*}
where $c_f>0$ depends only on $f$ and the fixed geometric normalizations.
The factor $h_d$ in Zhang's formula is canceled by our normalization
of the pairing. Since $u_d=1$ for $d<-4$, we have
\begin{equation}\label{eqn:Zhang}
\left|\langle\Delta_{f,d},\Delta_{f,d}\rangle_{\rm{BB}}\right|
\asymp_f\sqrt{|d|}\,\left|L'(1/2,f/K_d)\right|.
\end{equation}
In higher weight, positivity of this self-pairing is not assumed.
This accounts for the absolute value in Theorem~\ref{thm:heegnerCycle}.
Also, the expression \eqref{eq:root-number} gives
\begin{equation}\label{eq:cycle-root-number}
w(f\otimes\chi_d)=-w(f)
\end{equation}
for every $d\in\D_1^{-}$.

\subsection{Proof of Theorem \ref{thm:heegnerCycle}}

Similarly, we consider separately the cases of analytic rank
zero and one.

\subsubsection{Analytic rank zero}

Suppose that $L(1/2,f)\ne0$. Then $w(f)=+1$, and
\eqref{eq:cycle-root-number} gives $w(f\otimes\chi_d)=-1$.
Differentiating \eqref{eq:cycle-Artin}, we obtain
\begin{align*}
L'(1/2,f/K_d)=L(1/2,f)L'(1/2,f\otimes\chi_d).
\end{align*}
Applying Theorem~\ref{thm:main} with $r=1$ and $a=1$, and using
\eqref{eqn:Zhang}, we have
\begin{equation}\label{eq:cycle-rank-zero-height}
\max_{d\in\D_1^{-}(X)}
\left|\langle\Delta_{f,d},\Delta_{f,d}\rangle_{\rm{BB}}\right|
\gg_f\sqrt X\log X \cdot
\exp\left(C\sqrt{\frac{\log X}{\log\log X}}\right)
\end{equation}
for every fixed $0<C<1$.

\subsubsection{Analytic rank one}

Suppose that $L(1/2,f)=0$ and $L'(1/2,f)\ne0$.
Then $w(f)=-1$, and the twists in $\D_1^{-}$ have root number $+1$.
Differentiating \eqref{eq:cycle-Artin} now gives
\begin{align*}
L'(1/2,f/K_d)=L'(1/2,f)L(1/2,f\otimes\chi_d).
\end{align*}
Applying Theorem~\ref{thm:main} with $r=0$ and $a=1$, and using
\eqref{eqn:Zhang}, we have
\begin{equation}\label{eq:cycle-rank-one-height}
\max_{d\in\D_1^{-}(X)}
\left|\langle\Delta_{f,d},\Delta_{f,d}\rangle_{\rm{BB}}\right|
\gg_f\sqrt X
\exp\left(C\sqrt{\frac{\log X}{\log\log X}}\right)
\end{equation}
for every fixed $0<C<1$.
Combining \eqref{eq:cycle-rank-zero-height} and
\eqref{eq:cycle-rank-one-height} proves Theorem~\ref{thm:heegnerCycle}.

\section*{Acknowledgements}
The authors would like to thank Hazem Hassan and Gopal Maiti for helpful discussions.

\clearpage

\printbibliography

@misc{dong2026extremevaluesquadraticdirichlet,
      title={Extreme values of quadratic Dirichlet $L$-functions}, 
      author={Z. Dong and W. Wang and H. Zhang and S. Zhao},
      year={2026},
      eprint={2607.20408},
      archivePrefix={arXiv},
   %   primaryClass={math.NT},
   %   url={https://arxiv.org/abs/2607.20408}, 
}

@article {AL78,
    AUTHOR = {Atkin, A. O. L. and Li, W. C. W.},
     TITLE = {Twists of newforms and pseudo-eigenvalues of {$W$}-operators},
   JOURNAL = {Invent. Math.},
  FJOURNAL = {Inventiones Mathematicae},
    VOLUME = {48},
      YEAR = {1978},
    NUMBER = {3},
     PAGES = {221--243},
  %    ISSN = {0020-9910,1432-1297},
 %  MRCLASS = {10D12},
 % MRNUMBER = {508986},
%MRREVIEWER = {R.\ A.\ Rankin},
  %     DOI = {10.1007/BF01390245},
  %     URL = {https://doi.org/10.1007/BF01390245},
}

@article {BCDT01,
    AUTHOR = {Breuil, C. and Conrad, B. and Diamond, F. and
              Taylor, R.},
     TITLE = {On the modularity of elliptic curves over {$\mathbf Q$}: wild
              3-adic exercises},
   JOURNAL = {J. Amer. Math. Soc.},
  FJOURNAL = {Journal of the American Mathematical Society},
    VOLUME = {14},
      YEAR = {2001},
    NUMBER = {4},
     PAGES = {843--939},
  %    ISSN = {0894-0347,1088-6834},
 %  MRCLASS = {11G05 (11F80 11G07 14G35)},
%  MRNUMBER = {1839918},
%MRREVIEWER = {Karl\ Rubin},
 %      DOI = {10.1090/S0894-0347-01-00370-8},
 %      URL = {https://doi.org/10.1090/S0894-0347-01-00370-8},
}

@article {MR4958486,
    AUTHOR = {Darbar, P. and Maiti, G.},
     TITLE = {Large values of quadratic {D}irichlet {$L$}-functions},
   JOURNAL = {Math. Ann.},
  FJOURNAL = {Mathematische Annalen},
    VOLUME = {392},
      YEAR = {2025},
    NUMBER = {4},
     PAGES = {4573--4605},
  %    ISSN = {0025-5831,1432-1807},
 %  MRCLASS = {11M06 (11L40 11M20 11N37)},
%  MRNUMBER = {4958486},
%MRREVIEWER = {Christoph\ Aistleitner},
  %     DOI = {10.1007/s00208-025-03187-6},
%       URL = {https://doi.org/10.1007/s00208-025-03187-6},
}

@article {GZ86,
    AUTHOR = {Gross, B. H. and Zagier, D. B.},
     TITLE = {Heegner points and derivatives of {$L$}-series},
   JOURNAL = {Invent. Math.},
  FJOURNAL = {Inventiones Mathematicae},
    VOLUME = {84},
      YEAR = {1986},
    NUMBER = {2},
     PAGES = {225--320},
 %     ISSN = {0020-9910,1432-1297},
 %  MRCLASS = {11G40 (11F11 11G05 14G10)},
%  MRNUMBER = {833192},
%MRREVIEWER = {Loren\ D.\ Olson},
 %      DOI = {10.1007/BF01388809},
%       URL = {https://doi.org/10.1007/BF01388809},
}

@article {MR4775033,
    AUTHOR = {Gun, S. and Kohnen, W. and Soundararajan, K.},
     TITLE = {Large {F}ourier coefficients of half-integer weight modular
              forms},
   JOURNAL = {Amer. J. Math.},
  FJOURNAL = {American Journal of Mathematics},
    VOLUME = {146},
      YEAR = {2024},
    NUMBER = {4},
     PAGES = {1169--1191},
   %   ISSN = {0002-9327,1080-6377},
  % MRCLASS = {11F37 (11F30)},
 % MRNUMBER = {4775033},
%MRREVIEWER = {Ariel\ M.\ Pacetti},
  %     DOI = {10.1353/ajm.2024.a932437},
  %     URL = {https://doi.org/10.1353/ajm.2024.a932437},
}

@article {HB95,
    AUTHOR = {Heath-Brown, D. R.},
     TITLE = {A mean value estimate for real character sums},
   JOURNAL = {Acta Arith.},
  FJOURNAL = {Acta Arithmetica},
    VOLUME = {72},
      YEAR = {1995},
    NUMBER = {3},
     PAGES = {235--275},
  %    ISSN = {0065-1036,1730-6264},
%   MRCLASS = {11L40 (11M06 11M26)},
%  MRNUMBER = {1347489},
%MRREVIEWER = {Matti\ Jutila},
 %      DOI = {10.4064/aa-72-3-235-275},
  %     URL = {https://doi.org/10.4064/aa-72-3-235-275},
}

@article {MR4670148,
    AUTHOR = {Hua, S. and Huang, B.},
     TITLE = {Extreme central {$L$}-values of almost prime quadratic twists
              of elliptic curves},
   JOURNAL = {Sci. China Math.},
  FJOURNAL = {Science China. Mathematics},
    VOLUME = {66},
      YEAR = {2023},
    NUMBER = {12},
     PAGES = {2755--2766},
   %   ISSN = {1674-7283,1869-1862},
   %MRCLASS = {11F67},
 % MRNUMBER = {4670148},
%MRREVIEWER = {Shaoyun\ Yi},
  %     DOI = {10.1007/s11425-022-2216-y},
%       URL = {https://doi.org/10.1007/s11425-022-2216-y},
}

@book {IK04,
    AUTHOR = {Iwaniec, H. and Kowalski, E.},
     TITLE = {Analytic number theory},
    SERIES = {American Mathematical Society Colloquium Publications},
    VOLUME = {53},
 PUBLISHER = {American Mathematical Society, Providence, RI},
      YEAR = {2004},
     PAGES = {xii+615},
  %    ISBN = {0-8218-3633-1},
%   MRCLASS = {11-02 (11Fxx 11Lxx 11Mxx 11Nxx)},
 % MRNUMBER = {2061214},
%MRREVIEWER = {K.\ Soundararajan},
%       DOI = {10.1090/coll/053},
%       URL = {https://doi.org/10.1090/coll/053},
}

@article {MR3425386,
    AUTHOR = {Radziwi\l\l, M. and Soundararajan, K.},
     TITLE = {Moments and distribution of central {$L$}-values of quadratic
              twists of elliptic curves},
   JOURNAL = {Invent. Math.},
  FJOURNAL = {Inventiones Mathematicae},
    VOLUME = {202},
      YEAR = {2015},
    NUMBER = {3},
     PAGES = {1029--1068},
   %   ISSN = {0020-9910,1432-1297},
 %  MRCLASS = {11M41 (11G05)},
%  MRNUMBER = {3425386},
%MRREVIEWER = {D.\ R.\ Heath-Brown},
   %    DOI = {10.1007/s00222-015-0582-z},
  %     URL = {https://doi.org/10.1007/s00222-015-0582-%z},
}

@article {Shen22,
    AUTHOR = {Shen, Q.},
     TITLE = {The first moment of quadratic twists of modular
              {$L$}-functions},
   JOURNAL = {Acta Arith.},
  FJOURNAL = {Acta Arithmetica},
    VOLUME = {206},
      YEAR = {2022},
    NUMBER = {4},
     PAGES = {313--337},
   %  ISSN = {0065-1036,1730-6264},
  % MRCLASS = {11M06 (11F67)},
%  MRNUMBER = {4528422},
%MRREVIEWER = {Keiju\ Sono},
  %     DOI = {10.4064/aa211207-7-11},
   %    URL = {https://doi.org/10.4064/aa211207-7-11},
}

@article {MR2425151,
    AUTHOR = {Soundararajan, K.},
     TITLE = {Extreme values of zeta and {$L$}-functions},
   JOURNAL = {Math. Ann.},
  FJOURNAL = {Mathematische Annalen},
    VOLUME = {342},
      YEAR = {2008},
    NUMBER = {2},
     PAGES = {467--486},
    %  ISSN = {0025-5831,1432-1807},
  % MRCLASS = {11M06 (11N56)},
%  MRNUMBER = {2425151},
%MRREVIEWER = {D.\ R.\ Heath-Brown},
  %     DOI = {10.1007/s00208-008-0243-2},
 %      URL = {https://doi.org/10.1007/s00208-008-0243-2},
}

@article {shouwuzhang,
    AUTHOR = {Zhang, S.},
     TITLE = {Heights of {H}eegner cycles and derivatives of {$L$}-series},
   JOURNAL = {Invent. Math.},
  FJOURNAL = {Inventiones Mathematicae},
    VOLUME = {130},
      YEAR = {1997},
    NUMBER = {1},
     PAGES = {99--152},
   %   ISSN = {0020-9910,1432-1297},
 %  MRCLASS = {11G40 (11F66 14G10 14G40)},
 % MRNUMBER = {1471887},
%MRREVIEWER = {David\ Harari},
   %    DOI = {10.1007/s002220050179},
    %   URL = {https://doi-org.lib-%ezproxy.concordia.ca/10.1007/s002220050179},
}

@article {GilleSoule,
    AUTHOR = {Gillet, H. and Soul\'e, C.},
     TITLE = {Arithmetic intersection theory},
   JOURNAL = {Inst. Hautes \'Etudes Sci. Publ. Math.},
  FJOURNAL = {Institut des Hautes \'Etudes Scientifiques. Publications
              Math\'ematiques},
    NUMBER = {72},
      YEAR = {1990},
     PAGES = {93--174},
  %    ISSN = {0073-8301,1618-1913},
%   MRCLASS = {14G40 (14C17 32C30)},
%  MRNUMBER = {1087394},
%MRREVIEWER = {I.\ Dolgachev},
 %      URL = {http://www.numdam.org/item?id=PMIHES_1990__72__93_0},
}

@article {soundyoung,
    AUTHOR = {Soundararajan, K. and Young, M. P.},
     TITLE = {The second moment of quadratic twists of modular
              {$L$}-functions},
   JOURNAL = {J. Eur. Math. Soc. (JEMS)},
  FJOURNAL = {Journal of the European Mathematical Society (JEMS)},
    VOLUME = {12},
      YEAR = {2010},
    NUMBER = {5},
     PAGES = {1097--1116},
   %   ISSN = {1435-9855,1435-9863},
 %  MRCLASS = {11F66 (11F67)},
%  MRNUMBER = {2677611},
%MRREVIEWER = {D.\ R.\ Heath-Brown},
  %     DOI = {10.4171/JEMS/224},
 %      URL = {https://doi-org.lib-%ezproxy.concordia.ca/10.4171/JEMS/224},
}

@book {YuanZhangZhang,
    AUTHOR = {Yuan, X. and Zhang, S.-W. and Zhang, W.},
     TITLE = {The {G}ross-{Z}agier formula on {S}himura curves},
    SERIES = {Annals of Mathematics Studies},
    VOLUME = {184},
 PUBLISHER = {Princeton University Press, Princeton, NJ},
      YEAR = {2013},
     PAGES = {x+256},
   %   ISBN = {978-0-691-15592-0},
 %  MRCLASS = {11G18 (11F70 14G35)},
%  MRNUMBER = {3237437},
%MRREVIEWER = {Ernest\ Hunter\ Brooks},
}

@article {young,
    AUTHOR = {Young, M. P.},
     TITLE = {The first moment of quadratic {D}irichlet {$L$}-functions},
   JOURNAL = {Acta Arith.},
  FJOURNAL = {Acta Arithmetica},
    VOLUME = {138},
      YEAR = {2009},
    NUMBER = {1},
     PAGES = {73--99},
%      ISSN = {0065-1036,1730-6264},
%   MRCLASS = {11M06 (11L40)},
%  MRNUMBER = {2529465},
%MRREVIEWER = {Arnaud\ Chadozeau},
%       DOI = {10.4064/aa138-1-4},
%       URL = {https://doi-org.lib-ezproxy.concordia.ca/10.4064/aa138-%1-4},
}

\end{document}